\documentclass[12pt,a4paper]{article}
\usepackage{cmap}
\usepackage{fullpage}
\usepackage[T2A]{fontenc}
\usepackage[utf8]{inputenc}
\usepackage[english,russian]{babel}
\usepackage{amsmath,amssymb,amsthm}
\usepackage{graphicx}
\usepackage{hyperref}

\hypersetup{
  pdftitle={On the Kepler problem on the Heisenberg group},
  pdfauthor={S.G. Basalaev, S.V. Agapov},
  colorlinks=true,
  linkcolor=blue
}

\newcounter{through}
\numberwithin{through}{section}
\numberwithin{equation}{section}

\newtheorem{proposition}[through]{Proposition}
\newtheorem{theorem}[through]{Theorem}
\newtheorem{corollary}[through]{Corollary}

\theoremstyle{definition}
\newtheorem{remark}[through]{Remark}

\begin{document}
\selectlanguage{english}

\title{Dynamics in the Kepler problem on the Heisenberg group}
\author{Sergey Basalaev\thanks{
The work of the author is supported by the Mathematical Center in Akademgorodok under
the Agreement No. 075-15-2022-282 with the Ministry of Science and Higher Education of the Russian
Federation.}, Sergei Agapov\thanks{
The work of the author was performed according to the
Government research assignment for IM SB RAS, project
FWNF-2022-0004.}}
\date{\today}
\maketitle

\begin{abstract}
We study the nonholonomic motion of a point particle
on the Heisenberg group around the fixed ``sun''
whose potential is given by the fundamental solution
of the sub-Laplacian. We find three independent
first integrals of the system and show that its bounded
trajectories of the system are wound up around certain
surfaces of the fourth order.

\smallskip
\noindent
\emph{Keywords:}
Heisenberg group, Kepler problem,
nonholonomic dynamics,
almost Poisson bracket,
first integral.

\smallskip
\noindent
\emph{2010 Mathematics Subject Classification:}
37N05, 53C17, 70F25, 37J60.

\end{abstract}

\section{Introduction}

How would a planet move around the Sun on
the Heisenberg group? While studying the problem
we found that the authors of the paper~\cite{SM}
aim to answer this very question.
However, a known feature of nonholonomic mechanics
(see e.\,g.\ \cite{Bloch}) is that the variatonal problem
(control, geodesics, how to move from A to~B)
and the dynamics problem (how does it move on its own?) are
generally not equivalent. Indeed, for instance, in the
geodesic problem on the Heisenberg group,
to any initial velocity there corresponds a one-parametric
family of geodesics. On the other hand the dynamics is
uniquely determined by an initial position and a velocity
so it can't be \emph{any} geodesic (the actual solutions in
in this case are what is known in nonholonomic geometry as
the ``straightest'' lines).
It seems to us that the problem
actually studied in~\cite{SM} is the variatonal one~---
how to move efficiently on Heisenberg group in the
presence of a gravitational field. Here, we aim to
solve the dynamics problem instead.

We consider the Heisenberg group with the left-invariant
sub-Riemannian metric and a fixed ``sun'' at the origin.
The potential is given by the fundamental solution of the
sub-Laplacian~--- a generalization of the
Laplace--Beltrami operator to the sub-Riemannian manifolds.
Traditionally, to derive the non-holonomic equations of
motion the Lagrange--\hspace{0pt}d'\hspace{0pt}Alembert
principle is used.
In Section~\ref{SecToHamilton} we
remind how the equations of motion can be translated
to the form that uses the intrinsic structure
of nonholonomic distribution. This allows one to use
Hamiltonian language best suited for finding integrals
of the system.
In Section~\ref{SecKepler} we apply
this to study the Kepler problem on the Heisenberg group
and find its first integrals.
In contrast to the 6-dimensional variational problem
which is not Liouville integrable (proved in~\cite{StM}),
the dynamics problem is 5-dimensional and
turns out to have at least three independent first
integrals. This allows us to rather qualitatively
describe the geometry of trajectories of the system.
In particular, a typical trajectory of the system
winds up around the surface of order~4 which we found
explicitly. Due to the nonholonomic constraint the
surface in the Heisenberg group uniquely defines
the trajectory by its starting point. We also
describe a few special trajectories.

In relation to our research we note that the Kepler
problem on the Riemannian manifolds was studied
extensively starting from works of
Lobachevsky \cite{Lobachevsky} in hyperbolic space
and Serret~\cite{Serret} on the sphere. The survey
of related works in the spaces of constant curvature
may be found in~\cite{Diacu}. The aforementioned
paper~\cite{SM} has a few followups~\cite{DS, StM}
all of which seem to address the variational problem.

\section{Motion on sub-Riemannian manifolds}
\label{SecToHamilton}

Here we derive the equations of nonholonomic dynamics
in the generalized Hamiltonian form, simplified for the case
considered. The general form may be found in~\cite{Bloch}.

Consider a mechanical system in $\mathbb{R}^n$
with $k$ ideal functionally independent
nonintegrable constraints linear in
velocities. In Lagrangian coordinates
$q^i$, $\dot{q}^i$ these can be given by
\begin{equation}
\label{EqConstraintsGeneral}
  \sum_{i=1}^n a^j_i(q) \dot{q}^i = 0,
  \quad j = 1, \ldots, k.
\end{equation}
Locally the equations~\eqref{EqConstraintsGeneral}
can be solved to $k$ dependent velocities
and represented in the form
\begin{equation}
\label{EqConstraints}
  \dot{q}^{m+j} = \sum_{i=1}^{m} f^j_i(q) \dot{q}^i,
  \quad j = 1, \ldots, k,
\end{equation}
where $m = n - k$ and the velocities
$\dot{q}^1, \ldots, \dot{q}^m$ are assumed
to be independent.

Recall that for a nonholonomic system with the
Lagrangian $L(q, \dot{q}, t)$ and the constraints~\eqref{EqConstraintsGeneral}
the equations
of motion are derived using the Lagrange--d'Alembert
principle (see, e.\,g. \cite{Bloch})
\begin{equation}
\label{EqDalembertPrinciple}
  \frac{d}{dt} \frac{\partial L}{\partial \dot{q}^i}
  - \frac{\partial L}{\partial q^i}
  = \sum_{j=1}^k \lambda_j a^j_i,
  \quad
  i = 1, \ldots, n,
\end{equation}
where the Lagrange multipliers $\lambda_j$ are
determined in such a way that the trajectory satisfies
constraints~\eqref{EqConstraintsGeneral}.

It may be useful, especially for problems
with the constraints of
form~\eqref{EqConstraints},
instead of the Lagrangian
coordinates use the ones
in the distribution of admissible velocities.
Consider the vector fields
\begin{equation}
\label{EqVectorFields}
  X_i(q) = {\partial q_i}
      + \sum_{j=1}^k f^j_i(q)
      {\partial q_{m+j}},
  \quad i = 1, \ldots, m.
\end{equation}
Then the velocity $\dot{q}$ satisfies the
constraints~\eqref{EqConstraints} iff
$\dot{q} = \sum\limits_{i=1}^m \dot{q}^i X_i$.
Introduce the momentum 1-form
\begin{equation}
\label{EqMomentumForm}
  P
  = \sum_{i=1}^n \frac{\partial L}{\partial \dot{q}^i}
    dq^i.
\end{equation}
Then we can describe the dynamics by the following
generalization of Euler--Lagrange equations
(in what follows we denote the action of
1-form $\tau$ on the vector field $X$
as $\tau \langle X \rangle$).

\begin{proposition}
The dynamical motion in the system with the Lagrangian
$L(q, \dot{q}, t)$ and the constraints~\eqref{EqConstraints} is described
by the system of equations
\begin{align}
\label{EqEulerLagrange}
  & \frac{d}{dt} P \langle X_i \rangle
  = X_i L,
  & i = 1, \ldots, m,
  \\
  & \dot{q}^{m+j} = \sum_{i=1}^{m} f^j_i(q) \dot{q}^i,
  & j = 1, \ldots, k.
  \notag
\end{align}
\end{proposition}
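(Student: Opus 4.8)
The plan is to start from the Lagrange--d'Alembert equations~\eqref{EqDalembertPrinciple} and show that contracting the momentum dynamics against each admissible vector field $X_i$ kills the Lagrange multipliers. The point is that the constraint forms $a^j = \sum_i a^j_i\, dq^i$ annihilate the distribution spanned by $X_1,\ldots,X_m$, since the $X_i$ are built precisely so that $\dot q = \sum_i \dot q^i X_i$ satisfies~\eqref{EqConstraints}. Concretely, I would first verify that $a^j\langle X_i\rangle = 0$ for all $i=1,\ldots,m$ and $j=1,\ldots,k$, which is immediate from the forms of~\eqref{EqConstraints} and~\eqref{EqVectorFields}: evaluating $\sum_{\ell} a^j_\ell\, dq^\ell$ on $X_i = \partial_{q^i} + \sum_j f^j_i\,\partial_{q^{m+j}}$ reproduces the $i$-th combination of the solved constraint, which vanishes identically.

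Next I would rewrite the left-hand side of~\eqref{EqDalembertPrinciple} in terms of the momentum 1-form~\eqref{EqMomentumForm}. Observe that $P\langle X_i\rangle = \sum_\ell \frac{\partial L}{\partial \dot q^\ell}\, dq^\ell\langle X_i\rangle = \frac{\partial L}{\partial \dot q^i} + \sum_j f^j_i \frac{\partial L}{\partial \dot q^{m+j}}$, i.e.\ the contraction assembles exactly the components of the Euler--Lagrange covector along the directions that the constraints couple together. The idea is then to contract the full vector equation~\eqref{EqDalembertPrinciple} with $X_i$: summing the $i$-th equation and $f^j_i$ times the $(m+j)$-th equation. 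On the right-hand side this produces $\sum_\ell \lambda_? a^j_\ell X_i^\ell$-type terms that vanish by the annihilation property just established; on the left-hand side it produces the time-derivative of $P\langle X_i\rangle$ up to a correction coming from the fact that $\frac{d}{dt}$ does not commute with the contraction because $X_i$ depends on $q$ and hence on~$t$ along the trajectory.

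The main obstacle, and the step requiring care, is precisely this noncommutativity: $\frac{d}{dt}\bigl(P\langle X_i\rangle\bigr) = \bigl(\frac{d}{dt}P\bigr)\langle X_i\rangle + P\langle \frac{d}{dt}X_i\rangle$, so I must check that the extra term $P\langle \frac{d}{dt}X_i\rangle$ correctly combines with $-\frac{\partial L}{\partial q^\ell}$ terms to yield exactly $X_i L$. Here $X_i L = \partial_{q^i}L + \sum_j f^j_i\, \partial_{q^{m+j}}L$ is the directional derivative of the Lagrangian along the admissible field, taken with $\dot q$ held fixed; whereas the derivative of the coefficients $f^j_i$ along the trajectory contributes $\frac{\partial L}{\partial \dot q^{m+j}}\,\frac{d}{dt}f^j_i$. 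I would expand $\frac{d}{dt}f^j_i = \sum_\ell \frac{\partial f^j_i}{\partial q^\ell}\dot q^\ell$ and match it against the chain-rule expansion of $X_iL$ versus the mixed $\partial L/\partial q$ terms, using the constraint~\eqref{EqConstraints} to express $\dot q^{m+j}$ in terms of the independent velocities. The bookkeeping of these velocity-dependent terms is the delicate part; everything else is linear algebra on the annihilation of $a^j$ by $X_i$.

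Finally, appending the constraint relations $\dot q^{m+j} = \sum_i f^j_i\,\dot q^i$ as the remaining equations closes the system, giving~\eqref{EqEulerLagrange}. I expect the whole argument to reduce to the single clean identity $a^j\langle X_i\rangle = 0$ together with one careful product-rule computation, so the proof should be short once the derivative-of-contraction term is handled correctly.
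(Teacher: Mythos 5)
Your skeleton---contract the Lagrange--d'Alembert equations~\eqref{EqDalembertPrinciple} against the admissible fields $X_i$ so that the multipliers die because the constraint forms annihilate $X_i$---is exactly the paper's idea. But the step you yourself flag as delicate and defer (``the bookkeeping of these velocity-dependent terms'') is not a step that can be completed: it rests on a misreading of the statement. Contracting~\eqref{EqDalembertPrinciple} with $X_i$ gives
\begin{equation*}
  \sum_{\ell=1}^{n}\Big(\frac{d}{dt}\frac{\partial L}{\partial\dot q^{\ell}}-\frac{\partial L}{\partial q^{\ell}}\Big)(X_i)^{\ell}
  = \sum_{j=1}^{k}\lambda_j\, a^j\langle X_i\rangle = 0 ,
\end{equation*}
which is precisely $\bigl(\frac{d}{dt}P\bigr)\langle X_i\rangle = X_iL$, where $\frac{d}{dt}P := \sum_{\ell}\frac{d}{dt}\bigl(\frac{\partial L}{\partial\dot q^{\ell}}\bigr)\,dq^{\ell}$ is the 1-form whose \emph{components in the fixed coordinate coframe} have been differentiated. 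That is how $\frac{d}{dt}P\langle X_i\rangle$ in~\eqref{EqEulerLagrange} must be read; under this reading the left-hand side is linear over functions of $q$ in the vector slot, the multipliers drop out instantly, and no product rule ever arises---this is the paper's one-line necessity argument. Under your reading, where $\frac{d}{dt}$ acts on the scalar $P\langle X_i\rangle$, the equation that actually follows from~\eqref{EqDalembertPrinciple} is
\begin{equation*}
  \frac{d}{dt}\bigl(P\langle X_i\rangle\bigr)
  = X_i L + \sum_{j=1}^{k}\frac{df^j_i}{dt}\,\frac{\partial L}{\partial\dot q^{m+j}},
\end{equation*}
and the extra (Hamel-type) term neither combines into $X_iL$ nor vanishes in general: take $n=3$, $m=2$, the constraint $\dot q^{3}=q^{2}\dot q^{1}$ and $L=\frac12\bigl((\dot q^{1})^{2}+(\dot q^{2})^{2}+(\dot q^{3})^{2}\bigr)$; then for $i=1$ the extra term equals $\dot q^{2}\,q^{2}\dot q^{1}\not\equiv 0$. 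So the matching you propose would end with a nonzero leftover, not a proof. (In the paper's Heisenberg application this term happens to vanish because $L$ does not depend on $\dot z$, but the proposition is stated for general $L$.)

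A second, smaller gap: you prove only that~\eqref{EqDalembertPrinciple} implies~\eqref{EqEulerLagrange}. The proposition asserts that~\eqref{EqEulerLagrange} together with~\eqref{EqConstraints} \emph{describes} the motion, i.e., equivalence, and the paper devotes half its proof to the converse: define $\lambda_j:=\frac{d}{dt}\frac{\partial L}{\partial\dot q^{m+j}}-\frac{\partial L}{\partial q^{m+j}}$, which yields~\eqref{EqDalembertPrinciple} for $i=m+1,\ldots,n$, then decompose $\partial_{q^i}=X_i-\sum_{j}f^j_i\,\partial_{q^{m+j}}$ and use linearity of the contraction once more to recover~\eqref{EqDalembertPrinciple} for $i=1,\ldots,m$. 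Without this direction you have not excluded that the contracted system admits solutions that are not motions of the constrained system, so your argument, even with the first gap repaired, would prove only half of the proposition.
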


\begin{proof}
Introduce 1-forms of our constraints
\begin{equation}
\label{EqConstraintsForms}
  \tau^j(q) = dq^{m+j} - \sum_{i=1}^m f^j_i(q) \, dq^i,
  \qquad
  j = 1, \ldots, k.
\end{equation}
Then $\tau^j \langle X_i \rangle = 0$
for $i = 1, \ldots, m$ and
$\tau^j \langle \partial q_{m+l} \rangle = \delta^j_l$ for $l = 1, \ldots, k$.
The Lagrange--d'Alembert
equations~\eqref{EqDalembertPrinciple}
can be rewritten in our terms as
\[
  \frac{d}{dt} P
  \langle {\partial q_i} \rangle
  - d L
  \langle {\partial q_i} \rangle
  = \sum_{j=1}^k \lambda_j \tau^j
  \langle {\partial q_i} \rangle,
  \qquad i = 1, \ldots, n.
\]
Then, since the expression is linear w.\,r.\,t. the
term in the angle brackets
\[
  \frac{d}{dt} P
  \langle X_i \rangle
  - d L \langle X_i \rangle
  = \sum_{j=1}^k \lambda_j \tau^j
  \langle X_i \rangle = 0,
  \qquad i = 1, \ldots, m.
\]
The sufficiency of the
equations~\eqref{EqConstraints},
\eqref{EqEulerLagrange} follows from
the fact that we can recover Lagrange--d'Alembert
equations from them. Indeed, let
\[
  \lambda_j =
  \frac{d}{dt} P
  \langle {\partial q_{m+j}} \rangle
  - d L
  \langle {\partial q_{m+j}} \rangle
  = \frac{d}{dt} \frac{\partial L}{\partial \dot{q}^{m+j}} -
  \frac{\partial L}{\partial q^{m+j}},
  \quad
  j = 1, \ldots, k.
\]
This gives us the equations~\eqref{EqDalembertPrinciple}
for $i = m+1, \ldots, n$. Then, since
$\partial q_i = X_i - \sum\limits_{j=1}^k f^j_i(q) {\partial q_{m+j}}$
for $i = 1, \ldots, m$ we have
\[
  \frac{d}{dt} \frac{\partial L}{\partial \dot{q}^{i}} -
  \frac{\partial L}{\partial q^{i}}
  = \frac{d}{dt} P
  \langle X_i \rangle
  - d L \langle X_i \rangle
  - \sum_{j=1}^k f^j_i
  \Big( \frac{d}{dt} \frac{\partial L}{\partial \dot{q}^{m+j}} -
  \frac{\partial L}{\partial q^{m+j}} \Big)
  = - \sum_{j=1}^k \lambda_j f^j_i.
\]
These are the
equations~\eqref{EqDalembertPrinciple}
for $i = 1, \ldots, m$.
Thus, the system of
equations~\eqref{EqConstraints},
\eqref{EqEulerLagrange} is equivalent
to the one of~\eqref{EqConstraints},
\eqref{EqDalembertPrinciple}.
\end{proof}

Let $\mathcal{D}$ be the distribution
spanned by $X_1, \ldots, X_m$,
i.\,e.
$\mathcal{D}_q = \mathrm{span} \,
\{ X_1(q), \ldots, X_m(q) \}$.
One thing to note is that for deriving
equations~\eqref{EqEulerLagrange} for a particular
system it is enough to know the Lagrangian only
on $\mathcal{D}$, not on the whole $T \mathbb{R}^n$,
which allows us to immerse the problem
in the sub-Riemannian setting.

Recall that the (regular) sub-Riemannian structure
on a smooth manifold $M$ is given by the
constant rank distribution $\mathcal{D} \subset TM$
(i.\,e. $\mathcal{D}_x \subset T_x M$ is a subspace
and $\dim \mathcal{D}_x$ is independent of~$x$)
and the sub-Riemannian metric tensor
$\langle \cdot , \cdot \rangle$ on $\mathcal{D}$,
i.\,e. $\langle \cdot , \cdot \rangle_x$ is
a scalar product on $\mathcal{D}_x$.

Let us reformulate the problem in
Hamiltonian terms.
The \emph{energy} $E(q, \dot{q}, t)$ of the system is
defined as usual:
\[
  E = P \langle \dot{q} \rangle - L
\]
and satisfies $\frac{d}{dt} E = -\frac{\partial}{\partial t} L$ on trajectories of the system.
Note, that the dual basis to the
one of vector fields
$X_1, \ldots, X_m, \partial q_{m+1},
\ldots, \partial q_{m+k}$ consists of 1-forms
\[
  dq^1, \ldots, dq^m, \tau^{1}, \ldots, \tau^k.
\]
In particular, $dq^1, \ldots, dq^m$ form the basis
of $\mathcal{D}^*$. Introduce the momenta
$p = \sum\limits_{i=1}^m p_i \, dq^i$ on $\mathcal{D}^*$,
i.\,e.\ $p \langle X_i \rangle = p_i$,
$i = 1, \ldots, m$.
Assuming that
$\dot{q}(p, q, t) \in \mathcal{D}_q$ can be determined
uniquely from the equation
$p \langle \dot{q} \rangle = P \langle \dot{q} \rangle$ let us define the \emph{generalized Hamiltonian}
$H(p, q, t)$ on $\mathcal{D}^*$ as
\[
  H(p, q, t) = p \langle \dot{q} \rangle - L(q, \dot{q}, t)
  = \sum_{i=1}^m p_i \dot{q}^i - L(q, \dot{q}, t).
\]
For this assumption to take place
it is sufficient to require that the restriction of
the quadratic form
$\frac{\partial^2 L}{\partial \dot{q}^i \partial \dot{q}^j} dq^i dq^j$ on $\mathcal{D}$ is positive
definite.

Reformulating the
equations~\eqref{EqEulerLagrange} in terms of $H$
one obtains

\begin{proposition}
\label{PropHamiltonian}
The dynamical motion in the nonholonomic system with
the constraints~\eqref{EqConstraints}
and the generalized Hamiltonian
$H(p, q, t)$ on $\mathcal{D}^*$ is described
by the system of equations
\begin{align}
\label{EqHamilton}
  & \dot{q}^i = \frac{\partial H}{\partial p_i},
  \quad \dot{p}_i = - X_i H,
  & i = 1, \ldots, m,
  \\
  & \dot{q}^{m+j} = \sum_{i=1}^{m} f^j_i(q) \frac{\partial H}{\partial p_i},
  & j = 1, \ldots, k.
  \notag
\end{align}
\end{proposition}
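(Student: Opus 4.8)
The plan is to obtain the system~\eqref{EqHamilton} as the fibrewise Legendre transform of the Euler--Lagrange form~\eqref{EqEulerLagrange}, passing from the variables $(q,\dot{q})$ to $(q,p)$ and translating the two groups of equations in turn. First I would record the Legendre relations. For the sub-Riemannian Lagrangian $L$ does not depend on the complementary velocities $\dot{q}^{m+j}$ --- this is the content of the remark that $L$ is needed only on $\mathcal{D}$ --- so the momentum form reduces to $P=\sum_{i=1}^{m}\frac{\partial L}{\partial\dot{q}^{i}}\,dq^{i}$ and $p_i=P\langle X_i\rangle=\frac{\partial L}{\partial\dot{q}^{i}}$. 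The positivity of the restricted Hessian assumed before the statement makes this relation invertible, $\dot{q}^{i}=\dot{q}^{i}(q,p,t)$, so that $H=\sum_{i=1}^{m}p_i\dot{q}^{i}-L$ is well defined and the manipulations below are legitimate.

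The velocity equations come for free. Differentiating $H$ in $p_i$ and using $p_l=\partial L/\partial\dot{q}^{l}$ to cancel the terms $\sum_l\bigl(p_l-\frac{\partial L}{\partial\dot{q}^{l}}\bigr)\frac{\partial\dot{q}^{l}}{\partial p_i}$, I obtain $\frac{\partial H}{\partial p_i}=\dot{q}^{i}$, which is the first equation of~\eqref{EqHamilton}. Substituting it into the constraint $\dot{q}^{m+j}=\sum_i f^j_i\dot{q}^{i}$ of the preceding proposition immediately gives the last equation $\dot{q}^{m+j}=\sum_i f^j_i\frac{\partial H}{\partial p_i}$.

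For the momentum equations I would carry out the dual computation in the base directions. Holding $p$ fixed and using the same cancellation, one finds $\frac{\partial H}{\partial q^{a}}\big|_{p}=-\frac{\partial L}{\partial q^{a}}\big|_{\dot{q}}$ for every configuration index $a$; contracting with the components of $X_i$ yields the key identity $X_i H=-X_i L$. On the other hand $P\langle X_i\rangle=p_i$ by definition, so the equation $\frac{d}{dt}P\langle X_i\rangle=X_i L$ of~\eqref{EqEulerLagrange} reads $\dot{p}_i=X_i L$. Combining the two gives $\dot{p}_i=-X_i H$, which completes the system.

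The step that needs genuine care, and the one I expect to be the main obstacle, is the identity $X_i H=-X_i L$. The difficulty is that $X_i$ is not a coordinate field: it couples $\partial q_i$ with the fields $\partial q_{m+j}$ through the $q$-dependent coefficients $f^j_i$, while $H$ lives on $\mathcal{D}^{*}$ and $L$ on $\mathcal{D}$. Carried out for a general Lagrangian, the differentiation of $H$ along $X_i$ would produce, besides $-X_i L$, terms proportional to $\frac{\partial L}{\partial\dot{q}^{m+j}}$ weighted by the brackets $[X_i,X_l]\notin\mathcal{D}$ --- the anholonomy (gyroscopic) contributions of the nonholonomic constraint. These terms vanish precisely because the sub-Riemannian $L$ is independent of $\dot{q}^{m+j}$, and it is this vanishing that leaves the clean duality $X_i H=-X_i L$, hence the stated form $\dot{p}_i=-X_i H$ with no correction. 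Once this cancellation is secured, the rest is routine bookkeeping.
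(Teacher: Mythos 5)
Your proof is correct, and its computational core is the same reduction the paper performs: establish the Legendre identities $\frac{\partial H}{\partial p_i}=\dot q^i$ and $X_i H=-X_i L$, then substitute them, together with $p_i=P\langle X_i\rangle$, into the equations $\frac{d}{dt}P\langle X_i\rangle=X_i L$ of the preceding proposition. The genuine difference is where the hypothesis $\frac{\partial L}{\partial\dot q^{m+j}}=0$ enters. The paper derives both identities at once for a nominally arbitrary $L$ by expanding $dL$ and $dH$ in the adapted coframe $dq^1,\ldots,dq^m,\tau^1,\ldots,\tau^k$ and restricting to $\mathcal{D}\times\mathbb{R}$, whereas you compute coordinatewise under that explicit hypothesis. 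Strictly speaking, your hypothesis is not ``the content'' of the paper's remark --- the remark asserts only that knowledge of $L|_{\mathcal{D}}$ suffices, not that $L$ is independent of the constrained velocities --- so that step of yours is a small logical leap; but it is a clarifying one rather than a defect. For an $L$ genuinely depending on $\dot q^{m+j}$, the paper's restriction step silently discards the terms $\sum_j\frac{\partial L}{\partial\dot q^{m+j}}\sum_l\dot q^l\,df^j_l$, and the passage from the Lagrange--d'Alembert dynamics to the clean system~\eqref{EqHamilton} then acquires exactly the anholonomy corrections $\sum_{j,l}\frac{\partial L}{\partial\dot q^{m+j}}\,\dot q^l\,\tau^j\langle[X_i,X_l]\rangle$ that your closing paragraph anticipates (your only imprecision is attributing them solely to the identity $X_iH=-X_iL$; they arise from combining that discrepancy with the one in $\frac{d}{dt}P\langle X_i\rangle$, which fails to be function-linear in $X_i$). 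Since the sub-Riemannian Lagrangian of Section~\ref{SecKepler} satisfies your hypothesis, nothing is lost for the paper's purposes: the paper's coframe route buys nominal generality and all coefficient identities in one stroke, while yours makes explicit the condition under which the clean Hamiltonian form is actually equivalent to the nonholonomic dynamics.
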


\begin{proof}
Observe that since the bases
$X_1, \ldots, X_m, \partial q^{m+1}, \ldots, \partial^{m+k}$,
introduced in~\eqref{EqVectorFields},
and $d\dot{q}^1, \ldots, d\dot{q}^m, \tau^1, \ldots, \tau^k$,
introduced in~\eqref{EqConstraintsForms},
are dual, for a smooth function $f(q)$ we have
\[
  df
  = \sum_{i=1}^m df \langle X_i \rangle \, dq^i
  + \sum_{j=1}^k df \langle \partial q^{m+j} \rangle \, \tau^j
  = \sum_{i=1}^m X_i f \, dq^i
  + \sum_{j=1}^k \frac{\partial f}{\partial q^{m+j}} \, \tau^j.
\]
Then
\begin{multline*}
  dL = \sum_{i=1}^m X_i L \, dq^i
  + \sum_{j=1}^k \frac{\partial L}{\partial q^{m+j}} \, \tau^j
  + \sum_{i=1}^n \frac{\partial L}{\partial \dot{q}^i} d\dot{q}^i
  + \frac{\partial L}{\partial t} dt \\
  = \sum_{i=1}^m X_i L \, dq^i
  + \sum_{j=1}^k \frac{\partial L}{\partial q^{m+j}} \, \tau^j
  + \sum_{i=1}^n P \langle \partial q^i \rangle d\dot{q}^i
  + \frac{\partial L}{\partial t} dt.
\end{multline*}
Further, restricting $L$ on $\mathcal{D} \times \mathbb{R}$ with coordinates $q^1, \ldots, q^n$,
$\dot{q}^1, \ldots, \dot{q}^m$, $t$,
i.\,e.\ setting
$\dot{q} = \dot{q}^1 X_1 + \ldots + \dot{q}^m X_m$,
we obtain
\[
  dL|_{T\mathcal{D}}
  = \sum_{i=1}^m X_i L \, dq^i
  + \sum_{j=1}^k \frac{\partial L}{\partial q^{m+j}} \, \tau^j
  + \sum_{i=1}^m P \langle X_i \rangle d\dot{q}^i
  + \frac{\partial L}{\partial t} dt.
\]
Now, for $H = p \langle \dot{q} \rangle - L(q, \dot{q}, t)|_\mathcal{D}$ with $\dot{q} = \dot{q}(p, q, t)$ we have
\[
  dH = \sum_{i=1}^m
  \big( p_i - P \langle X_i \rangle \big) \, d\dot{q}^i
  + \sum_{i=1}^m \dot{q}^i \, dp_i
  - \sum_{i=1}^m X_i L \, dq^i
  - \sum_{j=1}^k \frac{\partial L}{\partial q^{m+j}} \, \tau^j
  - \frac{\partial L}{\partial t} dt.
\]
If $\dot{q}$ satisfies
$p \langle \dot{q} \rangle = P \langle \dot{q} \rangle$
then the first sum vanishes. Finally, since
for the function $H(p, q, t)$
\[
  dH
  = \sum_{i=1}^m \frac{\partial H}{\partial p_i}
  dp_i
  + \sum_{i=1}^m X_i H \, dq^i
  + \sum_{j=1}^k \frac{\partial H}{\partial q^{m+j}} \tau^j
  + \frac{\partial H}{\partial t} dt,
\]
the equations~\eqref{EqHamilton} follow
from~\eqref{EqEulerLagrange}.
\end{proof}

Observe that for time independent system $H$
is a first integral of~\eqref{EqHamilton},
\eqref{EqConstraints}.
We can define \emph{almost Poisson bracket}
(see e.\,g. \cite[Section 3.1]{Bloch})
on $\mathcal{D}^*$ as
\[
  \{ F, H \}
  = \sum_{i=1}^m
  \Big( \frac{\partial H}{\partial p_i} X_i F
  - \frac{\partial F}{\partial p_i} X_i H \Big).
\]
It retains all the properties of Poisson bracket
but the Jacobi identity. Nevertheless,
since $\dot{F} = \{ F, H \}$
it follows that $F$ is an integral
of~\eqref{EqHamilton},
\eqref{EqConstraints} iff $\{ F, H \} \equiv 0$.

\section{Motion in a potential field on the Heisenberg group}
\label{SecKepler}

Recall that the Heisenberg group
$\mathbb{H}^1 = (\mathbb{R}^3, \cdot, \delta_\lambda)$
is a homogeneous group with the group operation
\[
  (x, y, z)
  \cdot
  (x', y', z')
  =
  \Big(
  x + x', y + y', z + z' + \frac{xy' - x'y}{2}
  \Big),
\]
and the one-parametric family of anisotropic
dilatations
\[
  \delta_\lambda(x, y, z)
  = (\lambda x, \lambda y, \lambda^2 z),
  \quad \lambda > 0.
\]
Its Lie algebra $\mathfrak{h}^1$ of
left-invariant vector fields has the basis
\[
  X = \partial_x - \frac{y}{2} \partial_z,
  \quad
  Y = \partial_y + \frac{x}{2} \partial_z,
  \quad
  Z = [X, Y] = \partial_z.
\]
The dual basis of left-invariant 1-forms is
\[
  dx, \quad
  dy, \quad
  \tau = dz + \frac{y \, dx - x \, dy}{2}.
\]
The \emph{horizontal distribution}
$\mathcal{D} =
\mathop{\mathrm{span}} \{ X, Y \} \subset T \mathbb{H}^1$ is totally nonholonomic.
The form $\tau$ is its annihilator.
The sub-Riemannian structure on $\mathbb{H}^1$
is given by the quadratic form
$\langle \cdot, \cdot \rangle$ on $\mathcal{D}$.
We choose the one such that $X, Y$ form the
orthonormal basis:
\[
  ds^2 = dx^2 + dy^2.
\]
While this quadratic form is degenerate on
$T \mathbb{H}^1$ it is positive definite on
$\mathcal{D}$.
For the mechanical motion with the kinetic energy
$T = \frac 12 ds^2\langle \dot{q} \rangle = \frac 12
(\dot{x}^2 + \dot{y}^2)$ and the potential energy
$U = U(x, y, z)$ one has, as usual, the Lagrangian
$L = T - U$. By Proposition~\ref{PropHamiltonian}
we can translate equations to the Hamiltonian form
where the Hamiltonian
$H$ on $\mathcal{D}^*$ takes the form
$H = 2 T - L = T + U$, i.\,e.
\[
  H(x, y, z, p_X, p_Y)
  = \frac{p_X^2 + p_Y^2}{2} + U(x, y, z).
\]
We are interested in the gravitational potential
which in $\mathbb{R}^n$ is given by a fundamental
solution of the Laplacian.
The analogue of Laplace--Beltrami operator
on the Heisenberg group is the operator
$\Delta_H= X^2 + Y^2$. Its fundamental
solution\footnote{In the cited paper~\cite{SM} the
potential $U$ has the term $\frac{z^2}{16}$ instead of
$16z^2$. One can check that $16$ is the correct
coefficient since only in this case
$\Delta_H U = 0$ away from the origin.}
is found in~\cite{F}:
\[
  U = - \frac{k}{\rho^2},
  \quad \text{where }
  \rho(x, y, z) = ((x^2 + y^2)^2 + 16 \* z^2)^{\frac 14}
\]
and $k > 0$ is some constant.
Since both the distribution and the potential have
a rotational symmetry around $Oz$ it is natural to
make the cylindrical coordinate change
$x = r \cos \theta, y = r \sin \theta$. The basis of
$\mathcal{D}$
may be given by vector fields
\begin{align*}
  R &= \phantom{-r} \cos \theta \, X
       + \phantom{r} \sin \theta \, Y
    = \partial_r,
  \\
  S &= - r \sin \theta \, X + r \cos \theta \, Y
    = \partial_\theta
    + \tfrac{r^2}{2} \partial_z.
\end{align*}
Duals to the basis $R, S, \partial_z$ are
$dr, d\theta, \tau = dz - \frac{r^2}{2} d\theta$
and for the momenta we have
\[
  p_X \, dx + p_Y \, dy
  = (p_X \cos \theta + p_Y \sin \theta) dr
  + r (p_Y \, \cos \theta - p_X \, \sin \theta) d\theta
  = p_R \, dr + p_S \, d\theta.
\]
It follows that $T = \frac{p_R^2 + p_S^2 / r^2}{2}$ and the Hamiltonian becomes
\[
    H(r, \theta, z, p_R, p_S)
    = \frac{p_R^2 + \frac{1}{r^2} p_S^2}{2}
    - \frac{k}{(r^4 + 16 z^2)^{\frac 12}}.
\]
Since constraints in the new coordinates still have
the form~\eqref{EqConstraints} we may apply
Proposition~\ref{PropHamiltonian} to derive
the equations of motion:
\begin{align}
  \dot{r} &= \frac{\partial H}{\partial p_R} = p_R, &
  \dot{p}_R &= -R H = \frac{p_S^2}{r^3}
    - \frac{2 k r^3}{(r^4 + 16 z^2)^{\frac 32}},
  \notag
  \\
  \dot{\theta} &= \frac{\partial H}{\partial p_S}
  = \frac{p_S}{r^2}, &
  \dot{p}_S &= -S H
  = -\frac{8 k r^2 z}{(r^4 + 16 z^2)^{\frac 32}},
  \notag
  \\
  \dot{z} &= \frac{r^2}{2} \frac{\partial H}{\partial p_S}
  = \frac{p_S}{2}.
  \label{EqCylindrical}
\end{align}

\begin{theorem}
\label{ThBounded}
If $H < 0$ the solutions of~\eqref{EqCylindrical} are bounded with
$\sqrt{r^4 + 16 z^2} \le \frac{k}{|H|}$.
\end{theorem}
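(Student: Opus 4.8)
The plan is to exploit conservation of energy. Since the system~\eqref{EqCylindrical} is autonomous, the Hamiltonian $H$ is a first integral (as noted above for time-independent systems), so it retains its initial value along any trajectory. Writing $E = H < 0$, I would split $H$ into its kinetic and potential parts and observe that the kinetic part is a sum of squares, hence nonnegative.

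First I would record the elementary inequality
\[
  T = \frac{p_R^2 + \frac{1}{r^2} p_S^2}{2} \ge 0,
\]
valid wherever $r \neq 0$. Substituting into $H = E$ and using that $E < 0$, so that $E = -|H|$, gives
\[
  -\frac{k}{(r^4 + 16 z^2)^{1/2}} = E - T \le E = -|H|.
\]
Rearranging this (and using $k > 0$ together with the positivity of $\sqrt{r^4+16z^2}$) yields precisely
\[
  \sqrt{r^4 + 16 z^2} \le \frac{k}{|H|},
\]
which is the asserted bound.

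It then remains to convert this into genuine boundedness of the trajectory in $\mathbb{H}^1$. From $r^4 \le r^4 + 16 z^2$ I would conclude $x^2 + y^2 = r^2 \le k/|H|$, and from $16 z^2 \le r^4 + 16 z^2$ that $|z| \le \tfrac{k}{4|H|}$; together these confine the spatial coordinates $(x, y, z)$ to a compact region, namely the solid bounded by the level surface $\rho^2 = k/|H|$.

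I do not expect a serious obstacle here: the argument is the sub-Riemannian analogue of the classical fact that negative-energy Kepler orbits are bounded. The only point requiring a word of care is the behaviour near the $z$-axis $r = 0$, where the term $p_S^2/r^2$ is singular; but the inequality $T \ge 0$ holds wherever the kinetic energy is defined, so the bound persists on the whole trajectory. Note also that \emph{bounded} should be read here as confinement of the configuration point, not of the phase point, since the momenta may grow without bound as a trajectory approaches the origin $\rho \to 0$ where the potential diverges.
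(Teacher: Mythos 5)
Your proposal is correct and is essentially the paper's own argument: the paper likewise observes that $H + k/\sqrt{r^4+16z^2} = \tfrac{1}{2}\bigl(p_R^2 + \tfrac{1}{r^2}p_S^2\bigr) \ge 0$ and reads off the bound, with conservation of $H$ (noted earlier for time-independent systems) doing the rest. Your additional remarks on confinement of $(x,y,z)$ and the harmless singularity at $r=0$ are fine but not needed beyond the paper's one-line proof.
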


\begin{proof}
This easily follows from
the inequality
$H + k / \sqrt{r^4 + 16 z^2} = \frac{p_R^2 + \frac{1}{r^2} p_S^2}{2} \ge 0$.
\end{proof}

In what follows we search for the additional
first integrals of the system.

\begin{proposition}
\label{PropFirstOrder}
The system~\eqref{EqCylindrical} does not admit any
linear in momenta first integrals.
\end{proposition}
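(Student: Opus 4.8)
The plan is to assume a candidate first integral that is linear in the momenta,
\[
  F = a(r,\theta,z)\,p_R + b(r,\theta,z)\,p_S + c(r,\theta,z),
\]
and to show that the condition $\dot F \equiv 0$ forces $a\equiv b\equiv 0$ and $c$ constant, so that no nonconstant integral of this form exists. Since the kinetic part of $H$ is quadratic in $(p_R,p_S)$ while the potential part is momentum-free, the derivative $\dot F = \{F,H\}$ computed along~\eqref{EqCylindrical} is a polynomial of degree two in $(p_R,p_S)$. First I would compute $\dot F$ directly from~\eqref{EqCylindrical} and collect the coefficients of the six linearly independent monomials $p_R^2$, $p_R p_S$, $p_S^2$, $p_R$, $p_S$, and $1$. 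Each coefficient is a function of $(r,\theta,z)$ and must vanish identically, giving an overdetermined system of PDEs for $a,b,c$.

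Writing $S = \partial_\theta + \tfrac{r^2}{2}\partial_z$ for the horizontal field in the new coordinates, the three quadratic coefficients yield the Killing-type relations
\[
  \partial_r a = 0,\qquad
  \partial_r b + \tfrac{1}{r^2}\,S a = 0,\qquad
  \tfrac{a}{r} + S b = 0,
\]
the two linear coefficients give $\partial_r c = 0$ and $Sc = 0$, while the momentum-free coefficient — the only place where the potential enters — produces the algebraic constraint
\[
  a\,r + 4\,b\,z = 0.
\]
The essential feature is that this last relation couples $a$ and $b$; it is precisely the contribution of the gravitational potential, and it is what will rigidify the otherwise more flexible kinetic system.

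Next I would solve this system. From $\partial_r a = 0$ one has $a = a(\theta,z)$, and on the open set $\{z\neq 0\}$ the constraint lets me express $b = -\tfrac{r}{4z}\,a$. Substituting this into $\partial_r b + \tfrac{1}{r^2}Sa = 0$ and separating the powers of $r$ forces $\partial_\theta a = 0$ together with the ODE $\partial_z a = \tfrac{a}{2z}$, so that $a$ is a multiple of $z^{1/2}$ on each half-line. Feeding this $a$ and the corresponding $b$ into the remaining relation $\tfrac{a}{r}+Sb=0$ gives an expression proportional to $a\bigl(\tfrac1r + \tfrac{r^3}{16 z^2}\bigr)$; since the factor in parentheses is strictly positive for $r>0$, we conclude $a\equiv 0$ on $\{z\neq 0\}$, and then $b\equiv 0$ there as well. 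By continuity $a\equiv b\equiv 0$ everywhere, after which $\partial_r c = Sc = 0$ force $c$ to be constant.

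I expect the main obstacle to be the interplay between the kinetic (Killing) equations and the single potential-induced constraint: one must substitute the constraint back into the kinetic equations in the right order so that the surviving relation becomes sign-definite and hence admits only the trivial solution. A secondary point requiring care is the locus $z=0$, where the constraint $ar+4bz=0$ degenerates; this is handled by solving first on the dense open set $\{z\neq 0\}$ and invoking the continuity of $a,b,c$ to extend the conclusion to the whole phase space.
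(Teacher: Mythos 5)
Your proposal is correct: the coefficient equations you derive from $\dot F\equiv 0$ (the Killing-type relations $\partial_r a=0$, $\partial_r b+\tfrac{1}{r^2}Sa=0$, $\tfrac{a}{r}+Sb=0$, the conditions $\partial_r c=Sc=0$, and the potential-induced constraint $ar+4bz=0$) all check out against~\eqref{EqCylindrical}, and your elimination on $\{z\neq 0\}$ followed by continuity does force $a\equiv b\equiv 0$ and $c$ constant. The paper itself skips this proof entirely (``can be checked by straightforward calculations''), and its Appendix performs the analogous coefficient-matching only for quadratic integrals, so your argument is exactly the omitted straightforward calculation, carried out in full and correctly.
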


This statement can be checked by straightforward
calculations. We skip the details.
However, it turns out that there are a
few quadratic integrals in addition to the
Hamiltonian~$H$.

\begin{theorem}
\label{PropSecondOrder}
The system~\eqref{EqCylindrical}
admits quadratic in momenta first integrals
\begin{align*}
  F_1 &=
  \phantom{-} \Big( p_Rp_Sr-2p_R^2z+\frac{2p_S^2z}{r^2} \Big) \cos (2\theta)
  + \Big( \frac{4 p_Rp_Sz}{r} -p_S^2 +\frac{kr^2}{\sqrt{r^4+16z^2}} \Big) \sin (2\theta),
\\
  F_2 &=
  - \Big( p_Rp_Sr-2p_R^2z+\frac{2p_S^2z}{r^2} \Big) \sin (2\theta)
  + \Big( \frac{4 p_Rp_Sz}{r}-p_S^2+\frac{kr^2}{\sqrt{r^4+16z^2}} \Big) \cos (2\theta),
\\
  F_3 &=
  (2 z p_R - r p_S)^2
  + 4 z^2 \Big( \frac{p_S^2}{r^2}
  + \frac{2k}{\sqrt{r^4+16z^2}} \Big).
\end{align*}
Any three of $H, F_1, F_2, F_3$ are
functionally independent a.\,e.\ wherein all of them
satisfy the relation
\begin{equation}
\label{EqIntegralRelation}
  F_1^2 + F_2^2 = 2 H F_3 + k^2.
\end{equation}
\end{theorem}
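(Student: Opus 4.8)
The plan is to verify the three conservation laws by differentiating along the flow~\eqref{EqCylindrical}, to obtain the syzygy~\eqref{EqIntegralRelation} by a purely algebraic manipulation, and to settle functional independence by a single rank computation. The key preliminary observation is that the subsystem governing $(r, z, p_R, p_S)$ decouples from $\theta$: the right-hand sides of the equations for $\dot r, \dot p_R, \dot p_S, \dot z$ in~\eqref{EqCylindrical} do not contain $\theta$, while $\dot\theta = p_S / r^2$. I would introduce the $\theta$-free expressions
\[
  A = p_R p_S r - 2 p_R^2 z + \frac{2 p_S^2 z}{r^2},
  \qquad
  B = \frac{4 p_R p_S z}{r} - p_S^2 + \frac{k r^2}{\sqrt{r^4 + 16 z^2}},
\]
and note that $F_1 + i F_2 = (A + i B)\, e^{-2 i \theta}$. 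Differentiating along the flow gives $\frac{d}{dt}(F_1 + i F_2) = \bigl( \dot A + i \dot B - 2 i \dot\theta (A + i B) \bigr) e^{-2 i \theta}$, so the conservation of both $F_1$ and $F_2$ reduces to the two identities $\dot A = -2 \dot\theta\, B$ and $\dot B = 2 \dot\theta\, A$ in the reduced phase space, with $\dot\theta = p_S / r^2$.

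These two identities I would verify by substituting the reduced equations of motion and collecting terms. This is the decisive computational step. The only delicate point is tracking the contributions of the radical $\sqrt{r^4 + 16 z^2}$, whose time derivative $\frac{2 r^3 \dot r + 16 z \dot z}{\sqrt{r^4 + 16 z^2}}$ must cancel against the force terms in $\dot p_R$ and $\dot p_S$; once these cancellations are organized, the remaining polynomial terms match directly. The conservation of $F_3$ I would check in the same fashion by computing $\dot F_3$ straight from~\eqref{EqCylindrical}.

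For the relation~\eqref{EqIntegralRelation}, the matrix sending $(A, B)$ to $(F_1, F_2)$ is a rotation by $2\theta$, hence orthogonal, so $F_1^2 + F_2^2 = A^2 + B^2$ and all $\theta$-dependence vanishes. It then remains to verify $A^2 + B^2 = 2 H F_3 + k^2$ as an identity in $(r, z, p_R, p_S)$ involving the single radical $\sqrt{r^4 + 16 z^2}$. Expanding both sides, one finds a clean three-tier structure: the terms carrying a single factor $1/\sqrt{r^4 + 16 z^2}$ in $B^2$ coincide with those in $2 H F_3$ and thus cancel in the difference; the terms carrying $1/(r^4 + 16 z^2)$ combine to $\frac{k^2(r^4 + 16 z^2)}{r^4 + 16 z^2} = k^2$ after using $r^4 + 16 z^2 = \bigl(\sqrt{r^4 + 16 z^2}\bigr)^2$; and the purely polynomial parts cancel identically. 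This yields~\eqref{EqIntegralRelation}.

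Finally, the relation~\eqref{EqIntegralRelation} shows that $H, F_1, F_2, F_3$ are functionally dependent, so at most three can be independent. To see that any three are independent almost everywhere, it suffices to exhibit, for each of the four triples, a single point at which the corresponding $3 \times 5$ Jacobian has rank~$3$: since all four functions are real-analytic away from the singular set $\{r = 0\} \cup \{r^4 + 16 z^2 = 0\}$, the locus where the rank drops below~$3$ is a proper analytic subset and hence of measure zero. I would evaluate the Jacobians at a convenient point, for instance with $\theta = 0$ and $z, p_R, p_S$ all nonzero, where the rotational structure relating $F_1, F_2$ to $A, B$ makes the minors involving $F_1$ and $F_2$ transparent. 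The main obstacle throughout is purely the volume of algebra in the two reduced identities of the second paragraph; conceptually the complex reduction $F_1 + i F_2 = (A + i B) e^{-2 i \theta}$ is what makes the verification manageable.
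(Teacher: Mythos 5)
Your proposal is correct, and I checked its key claims: with $A = p_Rp_Sr - 2p_R^2z + \tfrac{2p_S^2z}{r^2}$ and $B = \tfrac{4p_Rp_Sz}{r} - p_S^2 + \tfrac{kr^2}{\sqrt{r^4+16z^2}}$ the flow identities $\dot A = -2\dot\theta B$, $\dot B = 2\dot\theta A$ and $\dot F_3 = 0$ hold (in each case the radical terms cancel precisely via $r^4 + 16z^2 = (\sqrt{r^4+16z^2})^2$), and $A^2 + B^2 - 2HF_3 - k^2 = 0$ follows exactly by the three-tier cancellation you describe. Your route differs in character from the paper's, though. The paper's ``proof'' of the theorem is a bare assertion that the verification is a straightforward calculation, and its real content is in the Appendix, which \emph{derives} the integrals: one posits a general quadratic-in-momenta ansatz, writes the determining PDE system \eqref{Sys1}--\eqref{Sys6} for its coefficients, and solves it, obtaining $F_1, F_2, F_3$ as the span of the solution space. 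That approach buys more — it shows these are essentially all quadratic integrals (up to combinations with $H$) and explains where they come from — but it is long and leaves the final verification implicit. Your approach buys a genuinely compact, checkable proof of the stated theorem: the complex reduction $F_1 + iF_2 = (A+iB)e^{-2i\theta}$, which exploits the rotational symmetry to collapse the $\theta$-dependence, is not in the paper and is exactly what makes the hand verification tractable; likewise your analyticity-plus-one-point argument for a.e.\ independence supplies a justification the paper omits entirely. The only outstanding work in your plan is the actual evaluation of one rank-3 point for each of the four triples, which is routine at a point with $\theta = 0$ and $z, p_R, p_S \neq 0$.
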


This theorem can be verified by straightforward
calculations.
The method we used to construct these integrals is described in Appendix~\ref{AppendixIntegrals}.

Knowing three independent first integrals allows us
to derive the equation of the surface (in coordinates
$(r, \theta, z)$) in which the trajectories lie. To do
that we introduce two more conserved quantities
$J \ge 0$ and in the case $J > 0$ also $\theta_0 \in [0, \pi)$ such that
\[
  F_1 = J \sin(2\theta_0),
  \quad
  F_2 = J \cos(2\theta_0).
\]
As main parameters we choose $H, F_3$ that do not
depend on the angle and the phase offset $\theta_0$
that captures the rotational symmetry of the problem.
Note, that from the definition $F_3 \ge 0$ since $k > 0$,
and $2 H F_3 + k^2 = J^2 \ge 0$. Therefore we have one general case
and two cases that might require special handling:

\begin{itemize}

\item
The general case $F_3 > 0$, $J > 0$.
Then $H > - \frac{k^2}{2 F_3}$ and $\theta_0 \in [0, \pi)$
is defined.

\item
The minimum energy case $F_3 > 0$, $J = 0$.
Then $H = H_{\min} = - \frac{k^2}{2 F_3}$,
$\theta_0$ is undefined.

\item
The degenerate case $F_3 = 0$.
In this case $J = k$, $\theta_0 \in [0, \pi)$ is defined
and $H$ is unbounded.

\end{itemize}

\begin{theorem}
\label{PropSurfaces}
All trajectories of the system~\eqref{EqCylindrical}
with the fixed values of the first integrals
$H, F_3, \theta_0$ lie on the surface which in the general
case $F_3 > 0$, $J > 0$ satisfies the equation
\begin{equation}
\label{EqSurface}
  F_3
  = 8 z^2 H
  + k \sqrt{r^4 + 16 z^2}
  - \sqrt{k^2 + 2H F_3} \, r^2
    \cos \big( 2 (\theta - \theta_0) \big).
\end{equation}

In the minimum energy case $F_3 > 0$, $J = 0$ the surface
becomes an ellipsoid of revolution
\begin{equation}
\label{EqSurfaceMinH}
  4 k^2 z^2 + k F_3 r^2 = F_3^2.
\end{equation}

In the degenerate case $F_3 = 0$ the surface degenerates to
the straight horizontal line passing through the origin
\begin{equation}
\label{EqSurfaceMinF3}
z = 0, \quad \theta = \theta_0 \mod \pi.
\end{equation}
\end{theorem}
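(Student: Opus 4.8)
The plan is to eliminate the momenta from the conserved quantities and extract a relation among $(r,\theta,z)$ alone. First I would abbreviate the two momentum blocks occurring in $F_1,F_2$,
\[
  A = p_Rp_Sr-2p_R^2z+\frac{2p_S^2z}{r^2},
  \qquad
  B = \frac{4p_Rp_Sz}{r}-p_S^2+\frac{kr^2}{\sqrt{r^4+16z^2}},
\]
so that $F_1=A\cos(2\theta)+B\sin(2\theta)$ and $F_2=-A\sin(2\theta)+B\cos(2\theta)$; that is, $(F_1,F_2)$ is obtained from $(A,B)$ by an orthogonal rotation through the angle $2\theta$. Inverting this rotation and substituting $F_1=J\sin(2\theta_0)$, $F_2=J\cos(2\theta_0)$ produces the compact formulas
\[
  A=-J\sin\big(2(\theta-\theta_0)\big),
  \qquad
  B=J\cos\big(2(\theta-\theta_0)\big),
\]
while $F_1^2+F_2^2=J^2$ together with the relation~\eqref{EqIntegralRelation} identifies $J=\sqrt{k^2+2HF_3}$.

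The decisive step is the purely algebraic identity
\[
  F_3+r^2B=8z^2H+k\sqrt{r^4+16z^2},
\]
valid for all values of the momenta. I would verify it by expanding $F_3=(2zp_R-rp_S)^2+4z^2\big(p_S^2/r^2+2k/\sqrt{r^4+16z^2}\big)$ and $r^2B$, inserting $2H=p_R^2+p_S^2/r^2-2k/\sqrt{r^4+16z^2}$, and rewriting $k\sqrt{r^4+16z^2}=k(r^4+16z^2)/\sqrt{r^4+16z^2}$; then the cross terms $\mp 4rzp_Rp_S$ cancel, the $r^2p_S^2$ terms cancel, and both sides collapse to $4z^2(p_R^2+p_S^2/r^2)+k(r^4+8z^2)/\sqrt{r^4+16z^2}$. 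Since this is an identity in the momenta, fixing $H$ and $F_3$ and replacing $B$ by $J\cos(2(\theta-\theta_0))$ turns it into an equation in $(r,\theta,z)$ only; inserting $J=\sqrt{k^2+2HF_3}$ gives precisely~\eqref{EqSurface}, which disposes of the general case.

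For the minimum energy case $J=0$ one has $B=0$, so the identity becomes $F_3=8z^2H+k\sqrt{r^4+16z^2}$ with $H=-k^2/(2F_3)$. Isolating the radical yields $k\sqrt{r^4+16z^2}=F_3+4k^2z^2/F_3>0$; squaring, the right side becomes a perfect square and $r^4$ factors, leaving the two branches $kF_3r^2=\pm(F_3^2-4k^2z^2)$. Because $H=-k^2/(2F_3)<0$ the motion is bounded by Theorem~\ref{ThBounded}, so only the bounded branch survives, namely the ellipsoid $4k^2z^2+kF_3r^2=F_3^2$ of~\eqref{EqSurfaceMinH}. In the degenerate case $F_3=0$ both summands of $F_3$ are nonnegative while the second, $4z^2(p_S^2/r^2+2k/\sqrt{r^4+16z^2})$, is strictly positive unless $z=0$; hence $z=0$, and then $(2zp_R-rp_S)^2=r^2p_S^2=0$ forces $p_S=0$ (for $r>0$). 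Evaluating the blocks at $z=p_S=0$ gives $A=0$ and $B=k$, whence $J=k$ and $\cos(2(\theta-\theta_0))=1$, i.e.\ $\theta=\theta_0\bmod\pi$, recovering the horizontal line~\eqref{EqSurfaceMinF3}. The main obstacle I anticipate is the careful bookkeeping in establishing the central identity, and, in the minimum energy case, correctly discarding the spurious branch produced by squaring.
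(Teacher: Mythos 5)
Your proposal is correct and follows essentially the same route as the paper: your central identity $F_3 + r^2B = 8z^2H + k\sqrt{r^4+16z^2}$ is exactly the paper's starting observation (written there as an expression for $F_3$), and all three cases are handled in the same way, including the invocation of Theorem~\ref{ThBounded} to discard the spurious branch after squaring in the case $J=0$. The only slight looseness is the phrase ``only the bounded branch survives'': to be precise, the branch $4k^2z^2 - kF_3r^2 = F_3^2$ must be excluded because it lies outside the region $\sqrt{r^4+16z^2} \le 2F_3/k$ except at the poles $r=0$, $|z| = F_3/(2k)$ (which also lie on the ellipsoid) --- this is exactly what the paper's inequality $F_3 \ge 2k|z|$ accomplishes, so your argument needs only that one-line elaboration.
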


\begin{proof}
Observe, that we can write $F_3$ as
\begin{align*}
  F_3 =
  8z^2 H
  + k \sqrt{r^4 + 16 z^2}
  - r^2 \Big(- p_S^2 + \frac{4 p_R p_S z}{r}
  + \frac{k r^2}{\sqrt{r^4 + 16 z^2}} \Big).
\end{align*}
From the expressions of $F_1$ and $F_2$ we have
\[
  \Big(- p_S^2 + \frac{4 p_R p_S z}{r}
  + \frac{k r^2}{\sqrt{r^4 + 16 z^2}} \Big)
  = F_1 \sin(2\theta) + F_2 \cos(2\theta).
\]
Therefore,
\begin{equation}
\label{EqSurfProof1}
  F_3 =
  8z^2 H
  + k \sqrt{r^4 + 16 z^2}
  - r^2 \big( F_1 \sin(2\theta) + F_2 \cos(2\theta) \big).
\end{equation}
Let $F_3 > 0$ and $J > 0$.
We have $F_1 \sin(2\theta) + F_2 \cos(2\theta) =
J \cos(2(\theta - \theta_0))$ and~\eqref{EqSurface}
follows.

Now, let $F_3 > 0$ and $J = 0$.
In this case $F_1 = F_2 = 0$
and the last term in~\eqref{EqSurfProof1} vanishes.
Since $H = -\frac{k^2}{2F_3}$ in this
case,~\eqref{EqSurfProof1} becomes
\[
  F_3^2 +
  4 k^2 z^2
  = k F_3 \sqrt{r^4 + 16 z^2}.
\]
Squaring and simplifying it we obtain
\begin{equation}
\label{EqSurfProof2}
  (F_3^2 - 4 k^2 z^2)^2 = k^2 F_3^2 r^4.
\end{equation}
By Theorem~\ref{ThBounded} for the trajectories of the system
$\sqrt{r^4 + 16 z^2} \le \frac{k}{|H|} = \frac{2 F_3}{k}$.
Therefore,
\[
  F_3 \ge \tfrac{k}{2} \sqrt{r^4 + 16 z^2}
  \ge \tfrac{k}{2} \sqrt{16 z^2} = 2 k z.
\]
Now~\eqref{EqSurfaceMinH} follows if we
take the square root of~\eqref{EqSurfProof2}.

Lastly, $F_3 = 0$ implies $z = 0$ and $J = k > 0$.
The restriction of~\eqref{EqSurfProof1} on $z = 0$ becomes
\[
  0 =
  k r^2
  - k r^2 \cos (2(\theta - \theta_0)).
\]
This gives us either $r = 0$ (the origin) or
$\theta = \theta_0 \mod \pi$.
\end{proof}

\begin{remark}
Solving the equation~\eqref{EqSurface} for the square root
$\sqrt{r^4 + 16 z^2}$ and then squaring it we obtain
the following equation
\[
  k^2 (r^4 + 16 z^2)
  = \big( F_3
  - 8 z^2 H
  + \sqrt{k^2 + 2H F_3} \, r^2
    \cos \big( 2 (\theta - \theta_0) \big) \big)^2,
\]
or in Cartesian coordinates
\[
  k^2 ((x^2 + y^2)^2 + 16 z^2)
  = \big( F_3
  - 8 z^2 H
  + \sqrt{k^2 + 2H F_3} \,
    (\cos(2\theta_0) (x^2 - y^2) + 2 \sin(2\theta_0) xy)
    \big)^2.
\]
We see that this is an equation of the fourth order.
However, its solution is a branched surface and only
one of its branches is the solution to the original
equation, i.\,e.\ the equation is quadratic in $z^2$ but
only one of its two roots solves~\eqref{EqSurface}.
\end{remark}

Examples of the surfaces corresponding
to the cases $H = 0$ and $H < 0$
may be seen on Fig.~\ref{FigSurfaces}.
Next we note a few properties of the surfaces obtained.

\begin{figure}[ht]

\includegraphics[width=0.5\textwidth]{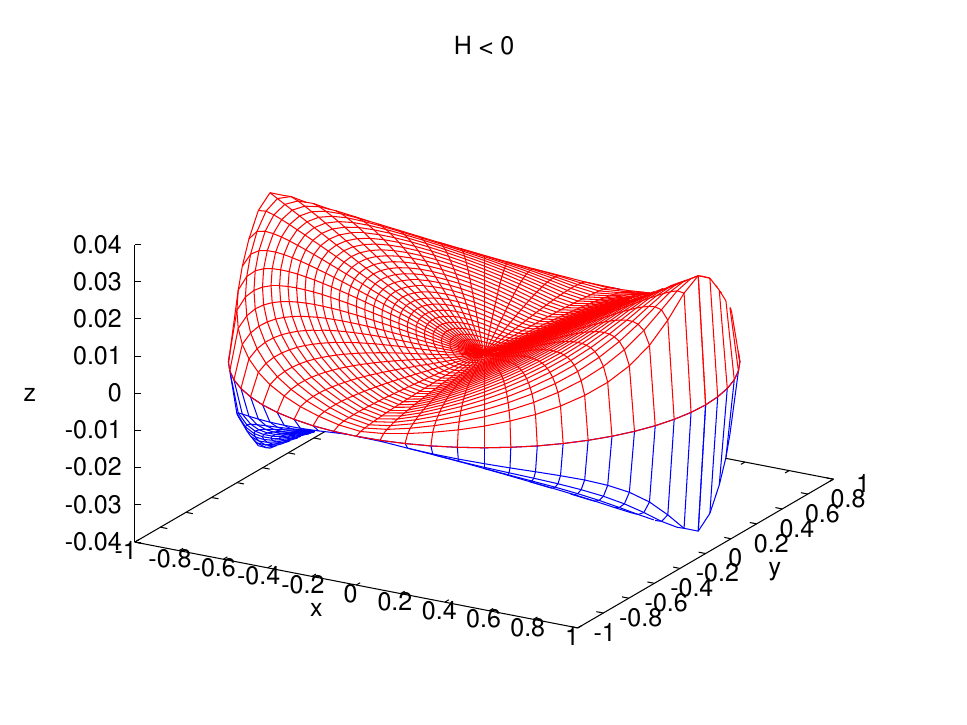}
\includegraphics[width=0.5\textwidth]{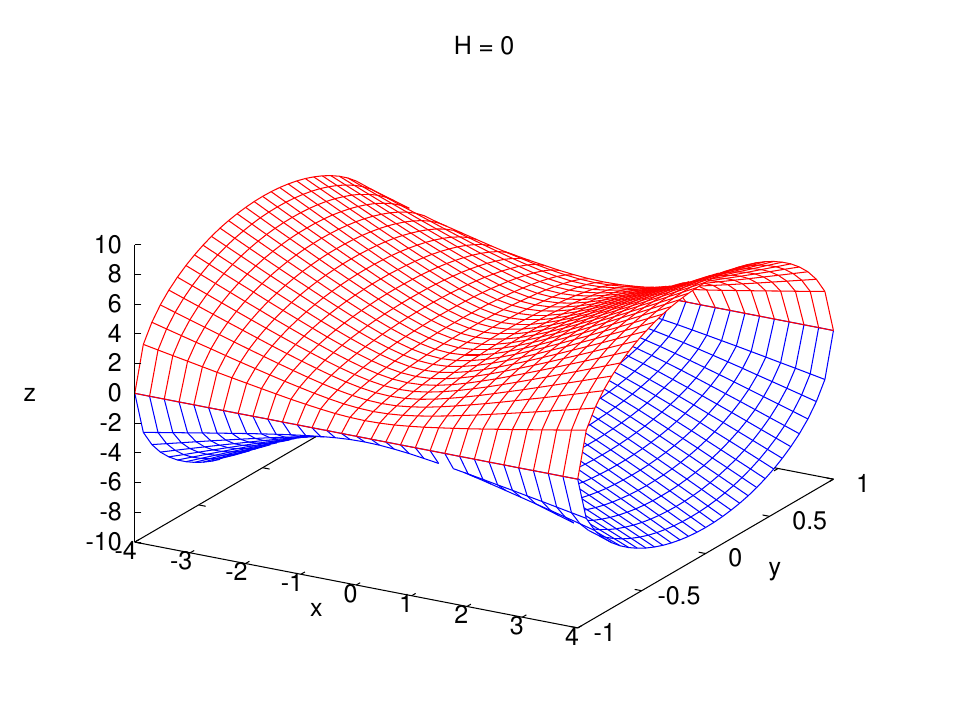}
\includegraphics[width=0.5\textwidth]{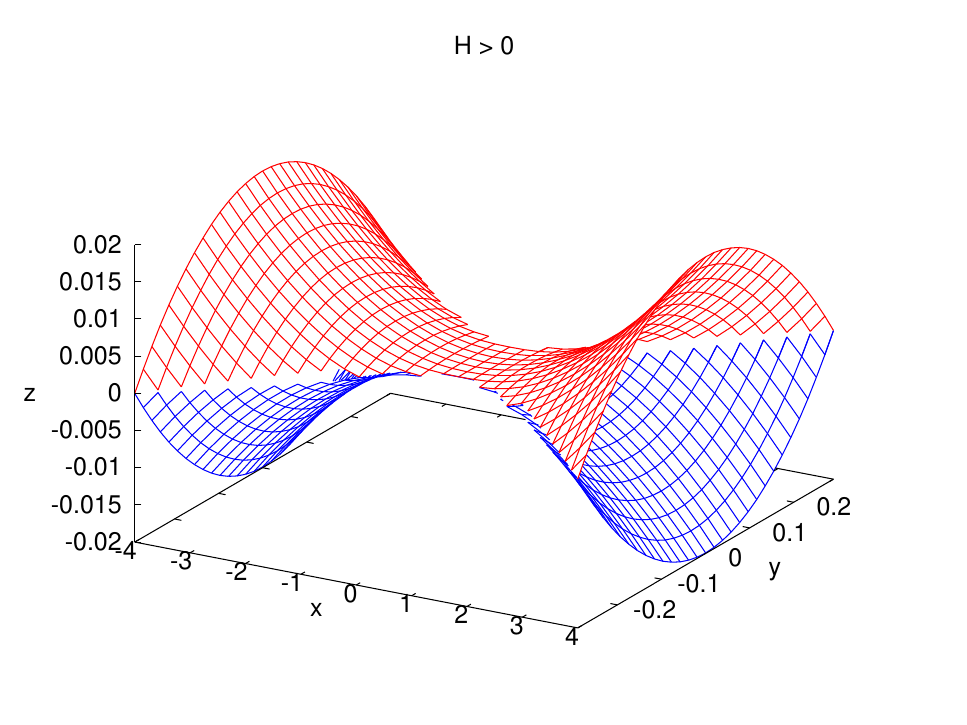}

\caption{Surfaces corresponding to the cases $H < 0$, $H = 0$ and $H > 0$ respectively.}
\label{FigSurfaces}
\end{figure}

\begin{corollary}
\label{ThSurfaceGeometry}
In the non-degenerate case $F_3 > 0$
the surfaces described in Theorem~\ref{PropSurfaces}
have the following properties.
\begin{enumerate}

\item
The surface is topologically

\begin{enumerate}

\item
a sphere in the case $H < 0$;

\item
a cylinder in the case $H \ge 0$.

\end{enumerate}

\item
The surface has reflection symmetry in the plane $z = 0$
and

\begin{enumerate}

\item
in the case $J > 0$
it has two more planes of symmetry
$\theta = \theta_0$ and $\theta = \theta_0 + \frac{\pi}{2}$;

\item
in the case $J = 0$ it is the surface of
revolution around $r = 0$.

\end{enumerate}

\item
The trace of the surface on the plane $z = 0$
is a quadratic curve:

\begin{enumerate}
\item
in the case $H < 0$ it is an ellipse with the semiaxes
$\sqrt{\frac{F_3}{k-J}}$ and $\sqrt{\frac{F_3}{k+J}}$;

\item
in the case $H = 0$ it is two parallel lines at the distance
$\sqrt{\frac{F_3}{k}}$ from the origin;

\item
in the case $H > 0$ it is a hyperbola with the semiaxis
$\sqrt{\frac{F_3}{k+J}}$.
\end{enumerate}

\end{enumerate}
\end{corollary}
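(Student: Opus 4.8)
The plan is to present the surface as a double graph over a region in the plane $z=0$ and thereby reduce its topology to that of this ``shadow'' region. Writing $s=z^2$ and
\[
  V(s) = 8 H s + k\sqrt{r^4 + 16 s} - \sqrt{k^2 + 2 H F_3}\; r^2 \cos\big( 2(\theta-\theta_0) \big),
\]
equation~\eqref{EqSurface} reads $V(z^2)=F_3$, so the surface is recovered from the set $\Omega=\{(r,\theta): \exists\, s\ge 0,\ V(s)=F_3\}$ by taking $z=\pm\sqrt{s}$. Since $s$ enters only through $\sqrt s$, the two sheets $z=\pm\sqrt s$ are glued along $\partial\Omega=\{s=0\}$, and the surface is the topological double of $\Omega$. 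Thus a disk-shaped $\Omega$ yields a sphere and a band-shaped $\Omega$ (one bounded and one infinite direction) yields a cylinder, which is exactly the dichotomy of item~(1).

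The symmetry statement~(2) I read directly off~\eqref{EqSurface}: it depends on $z$ only through $z^2$, giving the reflection $z\mapsto-z$ in the plane $z=0$; and on $\theta$ only through $\cos(2(\theta-\theta_0))$, which is invariant under $\theta-\theta_0\mapsto-(\theta-\theta_0)$ and $\theta-\theta_0\mapsto\pi-(\theta-\theta_0)$, i.e.\ under reflections in the planes $\theta=\theta_0$ and $\theta=\theta_0+\tfrac\pi2$; when $J=0$ the $\theta$-term drops out and the surface is one of revolution, in agreement with~\eqref{EqSurfaceMinH}. For the trace statement~(3) I set $z=0$ in~\eqref{EqSurface}, use $\sqrt{r^4}=r^2$, and pass to Cartesian coordinates $x',y'$ aligned with the ray $\theta=\theta_0$ (so $r^2=x'^2+y'^2$ and $r^2\cos(2(\theta-\theta_0))=x'^2-y'^2$), obtaining $(k-J)x'^2+(k+J)y'^2=F_3$. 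Because $J=\sqrt{k^2+2HF_3}$ one has $J<k$, $J=k$, $J>k$ precisely when $H<0$, $H=0$, $H>0$; since $k+J>0$ always, this is an ellipse, a pair of parallel lines, or a hyperbola respectively, whose semiaxes are read off immediately. Note that this trace is exactly $\partial\Omega=\{V(0)=F_3\}$, since $V(0)=(k-J)x'^2+(k+J)y'^2$.

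For item~(1) I analyse $V$. If $H\ge0$ then $V$ is strictly increasing in $s$ with $V(s)\to+\infty$, so for each $(r,\theta)$ there is a unique $s\ge0$ solving $V(s)=F_3$ exactly when $F_3\ge V(0)$; hence $\Omega$ is the region on the inner side of the trace, an infinite band bounded by the two lines (case $H=0$) or by the two branches of the hyperbola (case $H>0$), and its double is a cylinder. If $H<0$ then a computation gives $V''(s)=-64k/\sigma^3<0$ with $\sigma:=\sqrt{r^4+16s}$, so $V$ is concave with a single maximum at $\sigma=k/|H|$; the physically relevant root is the one on the increasing branch $\sigma\le k/|H|$, which is precisely the bounded regime of Theorem~\ref{ThBounded}. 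This root exists iff $V(0)\le F_3\le V_{\max}$, and $V(0)\le F_3$ is once more $(k-J)x'^2+(k+J)y'^2\le F_3$, i.e.\ the solid trace ellipse — a disk — whose double is a sphere; when $J=0$ the analysis collapses to the ellipsoid of revolution~\eqref{EqSurfaceMinH}.

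The hard part will be the case $H<0$, where squaring~\eqref{EqSurface} (as in the Remark) introduces an extraneous sheet and $V$ has two admissible-looking roots. I expect the main work to be twofold. First, to discard the large-$\sigma$ root cleanly and to verify that the selected root varies continuously across the locus $\sigma=k/|H|$ (where the two roots coincide), so that the two sheets $z=\pm\sqrt s$ genuinely close up into a single sphere. Second, to prove that the admissible root exists throughout the whole solid ellipse, i.e.\ that $F_3\le V_{\max}$ on $\Omega$; after computing
\[
  V_{\max}=\frac{k^2}{2|H|}+\frac{|H|}{2}\,r^4-\sqrt{k^2+2HF_3}\;r^2\cos\big(2(\theta-\theta_0)\big),
\]
this becomes an inequality I would establish using the relation $J^2=k^2+2HF_3$ together with the bound of Theorem~\ref{ThBounded}. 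Once $\Omega$ is shown to have no interior holes, the double-graph description gives the stated sphere/cylinder dichotomy, and the conic classification of the trace together with the symmetries completes the remaining items.
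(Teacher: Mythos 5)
Your construction is not the paper's proof --- the paper offers none, dismissing the corollary as ``straightforward and easy to check'' --- and your double-graph idea is the right way to make the topological claims precise: present the surface as the double of the planar region $\Omega$ over which $V(z^2)=F_3$ has an admissible root, glued along the trace $\{z=0\}$. Items (2) and (3) and the $H\ge 0$ half of item (1) are complete as you wrote them. One thing you should not gloss over in item (3): your trace equation $(k-J)x'^2+(k+J)y'^2=F_3$ gives, for $H=0$ (i.e.\ $J=k$), two lines at distance $\sqrt{F_3/(2k)}$ from the origin, whereas the corollary claims $\sqrt{F_3/k}$. Your value is the correct one (it is also the $J\to k$ limit of the minor semiaxis $\sqrt{F_3/(k+J)}$ of the ellipse in 3(a)), so here you are correcting the statement rather than verifying it; saying the constants are ``read off immediately'' hides this.

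The genuine gap is the $H<0$ part of item (1), which you explicitly leave as a program (``an inequality I would establish\dots''). It does close, and cleanly, so the plan is sound; here is the missing step. Put $u=|H|r^2$ and use $2|H|F_3=k^2-J^2$, so $k-J>0$. On the set $\{V(0)\le F_3\}$ one has $(k-J)r^2\le V(0)\le F_3$, hence $u\le\tfrac{k+J}{2}\le k$, so the critical value $\sigma=k/|H|$ of $\sigma=\sqrt{r^4+16z^2}$ is attained at some $s\ge 0$ and the increasing branch of $V$ is nonempty. Moreover, using $V(0)\ge(k-J)r^2$ and your formula for $V_{\max}$,
\[
  V_{\max}-F_3
  =\bigl(V_{\max}-V(0)\bigr)-\bigl(F_3-V(0)\bigr)
  \ge \frac{(k-u)^2}{2|H|}-\frac{(k-J)(k+J-2u)}{2|H|}
  =\frac{(J-u)^2}{2|H|}\ \ge\ 0,
\]
so the admissible root exists on the whole solid ellipse: $\Omega$ is a disk with no interior holes. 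Continuity of the selected root (including across the locus where the two roots merge) is immediate once you solve \eqref{EqSurface} as a quadratic in $\sigma$ and take the smaller root $\sigma_-=\frac{k}{|H|}-\sqrt{\;\cdot\;}$; then $z^2=(\sigma_-^2-r^4)/16$ is continuous on $\Omega$ and vanishes exactly on the trace ellipse, and the two sheets close up into a sphere.

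Finally, your insistence on selecting a root is necessary, not cosmetic, and it is not merely the squaring artifact discussed in the paper's Remark: for $H<0$ \emph{both} roots satisfy the unsquared equation \eqref{EqSurface}. For instance, with $k=1$, $H=-\tfrac12$, $F_3=1$ (so $J=0$), the full solution set of \eqref{EqSurface} is the ellipsoid $4z^2+r^2=1$ of \eqref{EqSurfaceMinH} together with the unbounded surface $4z^2-r^2=1$, which touches it at the poles; the literal zero set is therefore not a sphere. Item (1) for $H<0$ is true only for the branch singled out by the bound $\sqrt{r^4+16z^2}\le k/|H|$ of Theorem~\ref{ThBounded}, which is exactly the branch your construction selects. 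On this point your treatment is more careful than the paper's own.
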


The properties are straightforward
and easy to check.

A smooth surface $S \subset \mathbb{H}^1$
is transversal to the horizontal distribution
$\mathcal{D}$ at almost all points, i.\,e.
$T_x S \cap \mathcal{D}_x$ is one-dimensional
for a.\,e.\ $x \in S$. Therefore, the trajectory of
the system is rather uniquely defined by a starting
point on a surface, with the only possible exception
being when the solution arrives at the
point tangent to $\mathcal{D}$ with zero velocity.
An example of a bounded trajectory ($H < 0$)
which we belive to be a typical one is presented in Fig.~\ref{FigTrajectory}. Next we describe
special solutions corresponding
to the degenerate cases.

\begin{figure}[ht]

\includegraphics[width=0.5\textwidth]{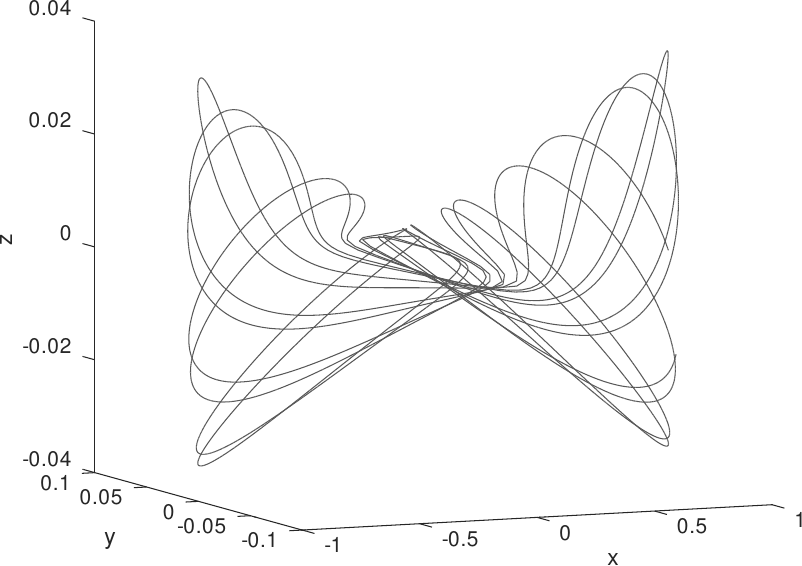}
\includegraphics[width=0.5\textwidth]{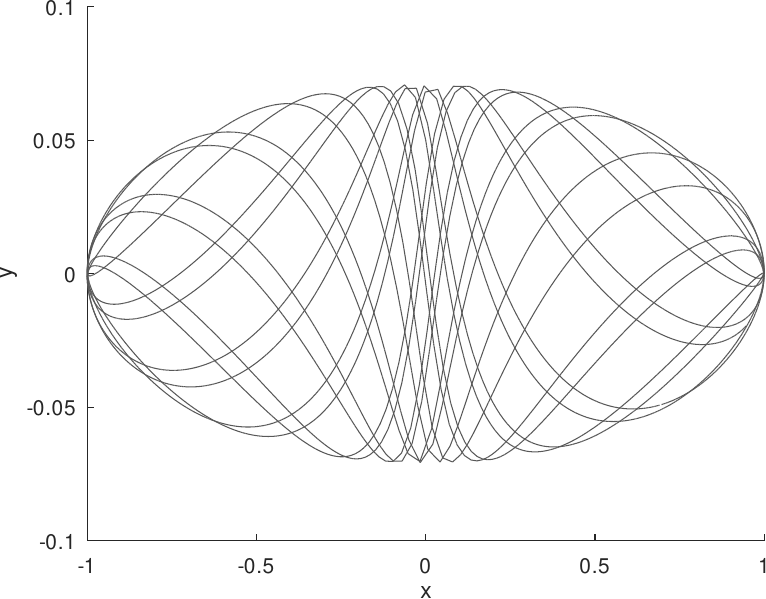}

\caption{A trajectory and its projection on the plane $Oxy$
with $k = 1$ and initial values
$x(0) = 1$, $y(0)= z(0) = p_X(0) =0$,
$p_Y(0) = \frac{1}{10}$.}
\label{FigTrajectory}
\end{figure}

\begin{theorem}
The only trajectories of~\eqref{EqCylindrical} passing
through the origin are straight lines in the plane
$z = 0$. In this case $\theta = const$ and
$r(t)$ satisfies the equation
$\frac{\dot{r}^2}{2} = H + \frac{k}{r^2}$.
These solutions correspond to the degenerate case
$F_3 = 0$.
\end{theorem}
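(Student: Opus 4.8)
The plan is to bypass any direct singular analysis of the equations near the origin and instead exploit the surface description already obtained in Theorem~\ref{PropSurfaces}. The decisive observation is that, among the level surfaces carrying the trajectories, the origin $r=z=0$ lies on such a surface only when $F_3=0$. Since every trajectory lies on the surface determined by its conserved values of $H,F_3,\theta_0$, a trajectory that reaches the origin must force $F_3=0$, and the degenerate case \eqref{EqSurfaceMinF3} then immediately pins it to a horizontal ray $\{z=0,\ \theta=\theta_0\bmod\pi\}$. What remains is to read off the radial equation of motion on the locus $F_3=0$.

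First I would make the phrase \emph{passing through the origin} precise as a limit: $r(t)\to0$ and $z(t)\to0$ as $t\to t_0$ for some (possibly finite collision) time $t_0$. Assuming for contradiction that $F_3>0$, I would split according to the cases of Theorem~\ref{PropSurfaces}. If $J>0$, the trajectory satisfies \eqref{EqSurface}; letting $(r,z)\to(0,0)$ along it, every term on the right-hand side vanishes — in particular $\sqrt{k^2+2HF_3}\,r^2\cos(2(\theta-\theta_0))\to0$ because the cosine is bounded and $r^2\to0$, so no limit of $\theta$ is required — while the left-hand side stays equal to the constant $F_3$, giving $F_3=0$, a contradiction. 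If $J=0$, the surface is the ellipsoid \eqref{EqSurfaceMinH}, whose left-hand side tends to $0$ while the right-hand side equals $F_3^2>0$, again a contradiction. Hence $F_3=0$.

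Finally, on $F_3=0$ I would extract the dynamics directly from the conserved $F_3$. Since $k>0$, the quantity $F_3=(2zp_R-rp_S)^2+4z^2\big(p_S^2/r^2+2k/\sqrt{r^4+16z^2}\big)$ is a sum of nonnegative terms, so $F_3=0$ forces $z\equiv0$ and then $p_S\equiv0$. This locus is invariant under \eqref{EqCylindrical}, since $\dot z=p_S/2=0$, $\dot p_S=-8kr^2z/(r^4+16z^2)^{3/2}=0$, and $\dot\theta=p_S/r^2=0$; thus $\theta$ is constant and the motion is a straight ray in the plane $z=0$. Substituting $z=0$ and $p_S=0$ into $H$ collapses it to $H=\tfrac12 p_R^2-k/r^2=\tfrac12\dot r^2-k/r^2$, i.e. $\dot r^2/2=H+k/r^2$, as claimed; and since $\dot r^2=2H+2k/r^2\ge 2k/r^2$ for small $r$, an inward-moving solution indeed collides with the center in finite time, confirming that these are exactly the orbits through the origin. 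The point I would treat most carefully — and the only real obstacle — is the rigor of the limiting argument at the singular origin: one must justify passing to the limit in the (closed) surface relation, which is why I favour either the cylindrical form \eqref{EqSurface}, where the sole $\theta$-dependence sits in a bounded factor multiplied by $r^2\to0$, or the Cartesian form of the Remark, which is manifestly continuous at the origin and yields $F_3^2=0$ upon substitution.
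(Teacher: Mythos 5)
Your proof is correct and follows essentially the same route as the paper's: the paper likewise argues that a trajectory through the origin forces $F_3=0$ (by citing Corollary~\ref{ThSurfaceGeometry}, i.e.\ the fact that for $F_3>0$ the invariant surfaces stay away from the origin), then identifies $F_3=0$ with $z\equiv 0$, $p_S\equiv 0$, and collapses $H$ to $\frac{\dot r^2}{2}-\frac{k}{r^2}$. Your departures are only refinements of the same argument — passing to the limit directly in the surface equations of Theorem~\ref{PropSurfaces} rather than invoking the corollary, spelling out the non-negativity of the terms in $F_3$, and checking finite-time collision — which make it somewhat more self-contained but not a different approach.
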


\begin{proof}
From Corollary~\ref{ThSurfaceGeometry} the
trajectories may pass through the origin only in the
case $F_3 = 0$, i.\,e.\ only if $z \equiv 0$
and $p_S \equiv 0$.
Then $\dot{\theta} = 0$ and the Hamiltonian
becomes $H = \frac{\dot{r}^2}{2} - \frac{k}{r^2}$.
\end{proof}

Therefore, the conserved quantity $F_3$ serves as
a kind of angular/vertical momentum. Two more special
solutions appear in the minimal energy case $J = 0$.

\begin{theorem}
\label{ThStationary}
The only stationary solutions of~\eqref{EqCylindrical}
are points on $Oz$:
\[
  r = 0,
  \quad
  z = \pm \frac{k}{4H}.
\]
These solutions correspond to the
minimal energy case $J = 0$.
\end{theorem}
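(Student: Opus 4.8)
The plan is to argue directly from the equations of motion~\eqref{EqCylindrical}, imposing the equilibrium conditions and then treating the coordinate axis $r=0$ separately, since the cylindrical system is singular there. A stationary solution is a constant point of the phase flow, so every right-hand side of~\eqref{EqCylindrical} must vanish. The two kinematic equations $\dot r = p_R$ and $\dot z = p_S/2$ immediately force $p_R = p_S = 0$, and then $\dot\theta = p_S/r^2 = 0$ holds automatically. Substituting $p_R=p_S=0$ into the equation for $\dot p_R$ leaves $-2kr^3/(r^4+16z^2)^{3/2}=0$; since $k>0$, this is impossible for $r>0$. Hence there are no stationary solutions off the axis, and the only candidates are the points of $Oz$.

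Because the terms $p_S^2/r^3$ and $p_S/r^2$ in~\eqref{EqCylindrical} are of the form $0/0$ at $r=0$, I would verify the equilibria on the axis in the Cartesian picture, where the Hamiltonian is $H=\tfrac12(p_X^2+p_Y^2)-k\,((x^2+y^2)^2+16z^2)^{-1/2}$ and the constraint reads $\dot z = \tfrac12(x p_Y - y p_X)$. Setting $p_X=p_Y=0$ makes all three position velocities vanish, and the force equations $\dot p_X = -XH$, $\dot p_Y = -YH$ reduce to the simultaneous vanishing of the horizontal derivatives $XU$ and $YU$. A short computation gives $XU \propto 2x(x^2+y^2)-8yz$ and $YU \propto 2y(x^2+y^2)+8xz$; forming $y\,(XU)-x\,(YU) \propto -8z(x^2+y^2)$ shows that both can vanish only when $z(x^2+y^2)=0$, which together with the two original equations forces $x=y=0$. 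Thus the equilibria are precisely the points $(0,0,z)$ with zero horizontal momentum, i.e.\ $r=0$.

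It then remains to pin down $z$ and identify the case. At such a point the kinetic part of $H$ vanishes, so $H$ equals the potential value $U(0,0,z) = -k/\sqrt{16z^2} = -k/(4|z|)$, whence $|z| = k/(4|H|)$; as $H<0$ this is exactly $z = \pm k/(4H)$. Finally, evaluating $F_3$ on the axis (with $p_R=p_S=0$ and $\sqrt{r^4+16z^2}=4|z|$) gives $F_3 = 2k|z|>0$, so that $J^2 = 2H F_3 + k^2 = 0$ by~\eqref{EqIntegralRelation}; this places the stationary points in the minimal energy case $J=0$, as claimed.

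The one genuine subtlety — and the step I expect to require the most care — is the passage through the coordinate singularity at $r=0$. The cylindrical equations~\eqref{EqCylindrical} cannot be evaluated there directly, so the Cartesian computation is needed both to exhibit the axial equilibria and to confirm that the apparent $0/0$ indeterminacies in $\dot p_R$ and $\dot\theta$ do not conceal additional solutions.
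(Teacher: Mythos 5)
Your proposal is correct, and its skeleton coincides with the paper's proof: off the axis a stationary solution would need $p_S=0$ (the paper gets this from $\dot z=0$; you also use $\dot r=0$ to kill $p_R$, which is not strictly needed), whereupon $\dot p_R=-2kr^3/(r^4+16z^2)^{3/2}<0$ gives the contradiction; then on $Oz$ the values $H=-k/(4|z|)$ and $F_3=2k|z|$ give $z=\pm k/(4H)$ and $J^2=k^2+2HF_3=0$. Where you genuinely go beyond the paper is the existence half of the claim: the paper's proof works entirely in the cylindrical chart, which is singular at $r=0$, and never actually verifies that the axis points with vanishing momenta are equilibria of the flow — it only evaluates $H$ and $F_3$ there. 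Your Cartesian computation ($XU\propto 2x(x^2+y^2)-8yz$, $YU\propto 2y(x^2+y^2)+8xz$, and the combination $y\,(XU)-x\,(YU)\propto -8z(x^2+y^2)$) both confirms that the points $(0,0,z)$, $z\neq 0$, are genuine fixed points and shows that no equilibria hide in the $0/0$ indeterminacies at $r=0$. This extra step costs a short calculation but closes a real (if minor) gap in the published argument, since the theorem asserts both that the axis points are stationary and that nothing else is.
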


\begin{proof}
Indeed, outside of the axis $r = 0$ the stationary solution
must satisfy $p_S \equiv 0$. But in this case
$\dot{p}_R$ is negative and the solution
is non-stationary. For the stationary solution on $Oz$
we have $H = - \frac{k}{\sqrt{16 z^2}}$
and $F_3 = \frac{ 8 k z^2}{\sqrt{16z^2}}$.
Therefore $J^2 = k^2 + 2 H F_3 = 0$.
\end{proof}

\begin{theorem}
Let $J = 0$ and $z_0 = \frac{k}{4|H|}$.
The trajectories of non-stationary solutions
to~\eqref{EqCylindrical} are the curves monotone
in $z$ and $\theta$ such that being parameterized by $z$
they have the form
\[
  r(z)
  = \Big( 2 \frac{z_0^2 - z^2}{z_0} \Big)^{\frac 12},
  \qquad
  \theta(z) = \frac 12 \log \frac{z_0 + z}{z_0 - z} + \theta(0),
  \qquad |z| \le z_0.
\]
These solutions connect the stationary points
$(0,0,\pm z_0)$ and take infinite time to approach them,
i.\,e. $t(z) \to \pm \infty$ as $z \to \pm z_0$.
\end{theorem}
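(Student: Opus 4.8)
The plan is to reduce everything to the geometry of the ellipsoid \eqref{EqSurfaceMinH} together with the two equations of \eqref{EqCylindrical} governing $z$ and $\theta$. Since $J = 0$ forces $F_3 > 0$ and $H = -\frac{k^2}{2F_3}$, I would first record that $z_0 = \frac{F_3}{2k} = \frac{k}{4|H|}$, so that \eqref{EqSurfaceMinH} rewrites as $r^2 = 2\frac{z_0^2 - z^2}{z_0}$. This is precisely the claimed formula for $r(z)$, and it already shows $|z| \le z_0$ with equality exactly when $r = 0$, i.e.\ at the poles $(0,0,\pm z_0)$, which are the stationary points of Theorem~\ref{ThStationary}.

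Next I would establish monotonicity, which is what legitimizes the parameterization by $z$. Because $F_1^2 + F_2^2 = J^2 = 0$, both bracketed expressions appearing in $F_1, F_2$ vanish identically along the trajectory; in particular $-p_S^2 + \frac{4 p_R p_S z}{r} + \frac{k r^2}{\sqrt{r^4 + 16 z^2}} = 0$. Were $p_S = 0$ at an interior point, this relation would force $\frac{k r^2}{\sqrt{r^4 + 16 z^2}} = 0$, hence $r = 0$, a contradiction. Thus $p_S$ keeps a constant sign while $r > 0$, so $\dot z = \frac{p_S}{2}$ and $\dot\theta = \frac{p_S}{r^2}$ never vanish and $z, \theta$ are strictly monotone.

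For $\theta(z)$ I would simply divide the two relevant equations of \eqref{EqCylindrical}, obtaining $\frac{d\theta}{dz} = \frac{\dot\theta}{\dot z} = \frac{p_S/r^2}{p_S/2} = \frac{2}{r^2} = \frac{z_0}{z_0^2 - z^2}$ after substituting $r^2$. Integrating and normalizing at $z = 0$ yields $\theta(z) = \frac{1}{2} \log \frac{z_0 + z}{z_0 - z} + \theta(0)$, as claimed; note the $p_S$ cancels, so the formula is insensitive to the direction of travel.

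The last and most delicate step is the infinite-time assertion, for which I need $p_S$ as an explicit function of $z$. Differentiating the ellipsoid relation along the motion gives $p_R = \dot r = -\frac{z\, p_S}{r\, z_0}$, and a direct computation yields $\sqrt{r^4 + 16 z^2} = \frac{2(z_0^2 + z^2)}{z_0}$. Substituting both into $H = -\frac{k}{4 z_0}$ and simplifying, the $p_R, p_S$ terms collapse and one solves $p_S^2 = k\, \frac{(z_0^2 - z^2)^2}{(z_0^2 + z^2)^2}$. Then $\dot z = \frac{p_S}{2}$ separates to $dt = \frac{2}{\sqrt{k}}\, \frac{z_0^2 + z^2}{z_0^2 - z^2}\, dz$, whose integrand behaves like $\frac{z_0}{z_0 - z}$ near $z = z_0$ (and symmetrically near $-z_0$), so the integral diverges logarithmically and $t(z) \to \pm\infty$ as $z \to \pm z_0$. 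I expect the main obstacle to be this final algebraic collapse: one must verify that, on the minimum-energy surface, the energy relation really reduces to a clean expression for $p_S^2$ rather than one still entangling $p_R$ — this is guaranteed precisely by the constraint $p_R = -\frac{z p_S}{r z_0}$ coming from differentiating the ellipsoid.
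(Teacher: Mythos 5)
Your proposal is correct and follows essentially the same route as the paper: the ellipsoid \eqref{EqSurfaceMinH} gives $r(z)$, the ratio $\frac{d\theta}{dz}=\frac{\dot\theta}{\dot z}=\frac{2}{r^2}$ gives $\theta(z)$, and the energy relation combined with the differentiated surface constraint $r\dot r+\frac{2z\dot z}{z_0}=0$ yields $\dot z^2=\frac{k(z_0^2-z^2)^2}{4(z_0^2+z^2)^2}$, whose reciprocal integrates to a divergent time integral near $z=\pm z_0$. The single point where you deviate is the monotonicity step: you deduce that $p_S$ cannot vanish away from the poles from $F_1=F_2=0$ (hence both rotating brackets vanish, and $p_S=0$ would force $r=0$), whereas the paper reaches the same conclusion by observing that the kinetic energy $H+k/\sqrt{r^4+16z^2}=\frac{k(z_0^2-z^2)}{4z_0(z_0^2+z^2)}$ is strictly positive except at the poles; both arguments are sound, and yours has the minor advantage of directly establishing the constancy of the sign of $p_S$ rather than only the non-vanishing of the velocity along the curve.
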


\begin{proof}
Note that $F_3 \ge 0$. Therefore,
$J = \sqrt{k^2 + 2HF_3} = 0$ implies $H < 0$.
Hence, $H = - \frac{k}{4 z_0}$ and
$F_3 = -\frac{k^2}{2 H} = 2 k z_0$.
The surface~\eqref{EqSurfaceMinH} in this case
is an ellipsoid of revolution
\[
  \frac{r^2}{2 z_0} + \frac{z^2}{z_0^2} = 1.
\]
From this equation we find the dependence $r(z)$.
Next, from $\frac{d\theta}{dz}
= \frac{\dot{\theta}}{\dot{z}} = \frac{2}{r^2}$
we also find the expression of $\theta(z)$
in a closed form.
This gives us a family of curves described in the
statement of the theorem. Take any such curve.
From the expression of $H$ we have
\begin{equation}
\label{EqDotZ}
  \frac{\dot{r}^2 + \frac{4 \dot{z}^2}{r^2}}{2}
  = \frac{p_R^2 + \frac{p_S^2}{r^2}}{2}
  = H + \frac{k}{\sqrt{r^4 + 16 z^2}}
  = \frac{k (z_0^2 - z^2)}{4 z_0 (z_0^2 + z^2)}
  > 0
\end{equation}
for all points except the poles. Therefore, the velocity
along the curve is nonzero except on the endpoints.
Hence the solution restricted to a curve is monotone in
$z$.
From the equation of the surface we have
\[
  0 = r \dot{r} + \frac{2 z \dot{z}}{z_0}.
\]
This together with~\eqref{EqDotZ} yields the equation
on $z$:
\[
  \dot{z}^2
  = \frac{k (z_0^2 - z^2)^2}{4 (z^2 + z_0^2)^2}.
\]
Choosing the solution increasing in $z$ we obtain
\[
  {t(z_1)
  = \int_0^{z_1} \frac{dz}{\dot{z}}
  = \frac{2}{\sqrt{k}} \int_0^{z_1}
    \frac{z_0^2 + z^2}{z_0^2 - z^2} \, dz}
\]
which diverges as $z_1 \to \pm z_0$.
The theorem is proved.
\end{proof}

\section{Conclusion}

In conclusion we see that the variational problem and
the dynamics problem on the Heisenberg group are vastly
different. While the first is Hamiltonian but
non-integrable in Liouville sense, the second one,
being non-Hamiltonian has at least three first integrals.
Both problems are interesting but provide a different
insight into the nonholonomic world.

\textbf{Author Contributions}:
All results except Proposition~\ref{PropFirstOrder},
Theorem~\ref{PropSecondOrder} and Appendix belong to
S.~Basalaev. Proposition~\ref{PropFirstOrder},
Theorem~\ref{PropSecondOrder} and Appendix are
contributed by S.~Agapov.

\textbf{Acknowledgements}:
The first author he is very grateful to second author for
deriving the first integrals (Proposition~\ref{PropFirstOrder},
Theorem~\ref{PropSecondOrder} and Appendix) which significantly progressed the research
and for overall critical comments.

The images were prepared using GNUPlot and Maxima
free software.

\appendix

\section{Derivation of quadratic first integrals}
\label{AppendixIntegrals}

By definition any first integral $F$
of~\eqref{EqCylindrical} must satisfy
the following relation:
\begin{equation}
\label{EqForIntegral}
  \frac{dF}{dt}
  = \frac{\partial F}{\partial r} p_R
  + \frac{\partial F}{\partial \theta} \frac{p_S}{r^2}
  + \frac{\partial F}{\partial z} \frac{p_S}{2}
  + \frac{\partial F}{\partial p_R} \left( \frac{p_S^2}{r^3}-\frac{2kr^3}{(r^4+16z^2)^{3/2}} \right) - \frac{\partial F}{\partial p_S} \frac{8kr^2z}{(r^4+16z^2)^{3/2}} = 0.
\end{equation}
It is quite natural to search for the first integrals of~\eqref{EqCylindrical} having the form of non-homogeneous polynomials in momenta.

We shall search for the quadratic integral
of~\eqref{EqCylindrical} in the form:
$$
F = a \, p_R^2+d \, p_Rp_S+b \, p_S^2
  + f \, p_R + g \, p_S+h,
$$
where all the coefficients are unknown functions which depend on $r$, $\theta$, $z$.
Writing down the condition~\eqref{EqForIntegral}
for such an integral $F$, we obtain the system of
PDEs which splits into two parts: the first one
contains relations between the unknown functions $a$, $b$, $d$, $h$ only,
the second one is between $f$ and $g$. As in
Proposition~\ref{PropFirstOrder},
it is easy to check that if $F$ is the first
integral, then both functions $f$ and $g$ must
vanish identically.
So we start our analysis with an integral of the
form
\[
F = a(r,\theta,z) p_R^2 + d(r,\theta,z) p_R p_S
  + b(r,\theta,z) p_S^2 + h(r,\theta,z).
\]
The condition~\eqref{EqForIntegral} implies:
\begin{gather}
a_r=0,
\label{Sys1}
\\
2a_\theta+r^2(a_z+2d_r)=0,
\label{Sys2}
\\
4a+r^3d_z+2r(d_\theta+r^2b_r)=0,
\label{Sys3}
\\
2d+r^3b_z+2rb_\theta=0,
\label{Sys4}
\\
2r^3(r^4+16z^2)^{3/2}h_r-8kr^6a-16kr^5zd=0,
\label{Sys5}
\\
r(r^4+16z^2)^{3/2}(r^2h_z+2h_\theta)
-4kr^6d-32kr^5zb=0.
\label{Sys6}
\end{gather}
Integrating the equations
\eqref{Sys1}--\eqref{Sys3} successively,
we obtain
\begin{gather*}
  a(r,\theta,z) = \alpha(\theta,z),
  \qquad
  d(r,\theta,z) = \gamma(\theta,z)
  - \frac{1}{2} r \alpha_z
  + \frac{\alpha_\theta}{r},
\\
  b(r,\theta,z) = \frac{\alpha}{r^2}
  + \frac{\alpha_{\theta\theta}}{2r^2}
  + \frac{r^2 \alpha_{zz}}{8}
  + \frac{\gamma_\theta}{r}
  - \frac{r\gamma_z}{2}
  + \omega(\theta,z),
\end{gather*}
where $\alpha(\theta,z)$, $\gamma(\theta,z)$,
$\omega(\theta,z)$ are arbitrary functions.
Then~\eqref{Sys4} takes the form
\[
  \alpha_{zzz}r^6 - 4\gamma_{zz}r^5
  + (2\alpha_{\theta zz} + 8\omega_z)r^4
  + 4(\alpha_{\theta \theta z} + 4\omega_\theta)r^2
  + 16(\gamma_{\theta\theta} + \gamma)r
  + 8(\alpha_{\theta\theta\theta} + \alpha_\theta) = 0.
\]
This is a polynomial in $r$ with coefficients
depending on $\theta$, $z$ only. Since this
polynomial must vanish, all its coefficients must
vanish as well. This allows one to find
$\alpha(\theta,z)$, $\gamma(\theta,z)$,
$\omega(\theta,z)$ and, consequently, the
coefficients $a(r,\theta,z)$, $b(r,\theta,z)$,
$d(r,\theta,z)$ explicitly. We omit these long but
simple calculations and skip the final form of
these coefficients since they are quite cumbersome.

After that we are left with two equations
\eqref{Sys5}, \eqref{Sys6} on the unknown function $h(r,\theta,z)$ which take the form:
\begin{gather}
  (r^4+16z^2)^{3/2}h_r
  + 2kr(c_8 r^2 - z(4 c_9 + z(4 c_3+c_5 r^2+4c_6z)))\cos (2\theta)
  \notag
\\
  -2kr(c_9r^2+z(4c_8+z(4c_2-c_6r^2+4c_5z)))
  \sin (2\theta)
  - 4kr^3(c_7-c_4z^2)
  \notag
\\
  -8kr^2z ((s_2+s_4z)\cos \theta
  +(s_3+s_5z)\sin \theta) = 0,
\label{Sys7}
\end{gather}
\begin{gather}
  (r^4+16z^2)^{3/2}(r^2h_z+2h_\theta)+2c_1kr^2(r^4-16z^2)
\notag
\\
-kr^2(4c_9r^2+c_2(r^4+16z^2)-2z(-8c_8+2c_3r^2+c_5r^4+6c_6r^2z-8c_5z^2))\cos (2\theta)
\notag
\\
+kr^2(-4c_8r^2+c_3(r^4+16z^2)+2z(8c_9+2c_2r^2-c_6r^4+6c_5r^2z+8c_6z^2))\sin (2\theta)
\notag
\\
-4kr^3(s_2r^2+z(8s_3-3s_4r^2+8s_5z))\cos \theta
+4kr^3(-s_3r^2+z(8s_2+3s_5r^2+8s_4z))\sin \theta
\notag
\\
-4kr^2z(8c_7+8s_1r^2+c_4(r^4+8z^2))=0.
\label{Sys8}
\end{gather}
Here $c_k$, $s_k$ are arbitrary constants.
The equation~\eqref{Sys7} can be integrated.
However,  the general solution $h(r,\theta,z)$
to~\eqref{Sys7} is expressed in terms of elliptic
integrals. We consider the simplest case
$$
s_2=s_3=s_4=s_5=0.
$$
In this case $h(x,y,z)$ can be found from~\eqref{Sys7}
in terms of elementary functions as follows:
$$
h(r,\theta,z) = \psi(\theta,z) + \frac{k}{4z \sqrt{r^4+16z^2}}\phi(r,\theta,z),
$$
where $\psi(\theta,z)$ is an arbitrary function and
\begin{multline*}
\phi(r,\theta,z) = (c_9r^2+z(4c_8+c_3r^2+c_6r^2z-4c_5z^2))\cos (2\theta)
\\
+ (c_8r^2+z(-4c_9+c_2r^2+c_5r^2z+4c_6z^2))\sin (2\theta) + 8z (c_4z^2-c_7).
\end{multline*}
The unknown function $\psi(\theta,z)$ should be
chosen such that the relation~\eqref{Sys8}
holds identically. It seems that the only possible way
to satisfy this requirement is to put
$$
\psi(\theta,z) \equiv 0, \qquad c_1=c_5=c_6=c_8=c_9=s_1=0.
$$
In this case~\eqref{Sys8} is satisfied. Thus we found all the coefficients of $F.$ Notice that $\widetilde{F}=4F-8c_7H$ is also the first integral of~\eqref{EqCylindrical} having the simpler form:
$$
\widetilde{F}
  = \widetilde{a}\,p_R^2 + \widetilde{d}\,p_Rp_S
  + \widetilde{b}\,p_S^2 + \widetilde{h},
$$
where
\begin{align*}
\widetilde{a} &= 2z(2c_4z-c_2\cos (2\theta)
  +c_3\sin (2\theta)),
\\
\widetilde{d} &
  =\Big( c_2r+\frac{4c_3z}{r} \Big) \cos(2\theta)
  - \Big( c_3x-\frac{4c_2z}{r} \Big) \sin(2\theta) -4c_4rz,
\\
\widetilde{b} &= \frac{1}{r^2} \big(
  (-c_3r^2+2c_2z) \cos(2\theta)
  -(c_2r^2+2c_3z) \sin(2\theta)
  +c_4 (r^4+4z^2) \big),
\\
\widetilde{h} &= \frac{k}{\sqrt{r^4+16z^2}}
  \big( r^2(c_3\cos (2\theta)+c_2 \sin (2\theta))+8c_4z^2 \big).
\end{align*}
Here $c_2$, $c_3$, $c_4$ are arbitrary constants.
It is left to notice that $\widetilde{F}$ is
linear in these constants, i.\,e. it has the form
$\widetilde{F}=c_2F_1+c_3F_2+c_4F_3$.
This implies that the functions
\begin{align*}
  F_1 &=
  \phantom{-} \Big( p_Rp_Sr-2p_R^2z+\frac{2p_S^2z}{r^2} \Big) \cos (2\theta)
  + \Big( -p_S^2+\frac{4 p_Rp_Sz}{r}+\frac{kr^2}{\sqrt{r^4+16z^2}} \Big) \sin (2\theta),
\\
  F_2 &=
  - \Big( p_Rp_Sr-2p_R^2z+\frac{2p_S^2z}{r^2} \Big) \sin (2\theta)
  + \Big( -p_S^2+\frac{4 p_Rp_Sz}{r}+\frac{kr^2}{\sqrt{r^4+16z^2}} \Big) \cos (2\theta),
\\
  F_3 &=
  4z^2 p_R^2 - 4rz p_Rp_S
  + \frac{r^4+4z^2}{r^2}p_S^2
  + \frac{8k z^2}{\sqrt{r^4+16z^2}}.
\end{align*}
are also integrals of~\eqref{EqCylindrical}.

\noindent
Sergey Basalaev \\
Novosibirsk State University,
1 Pirogova st., 630090 Novosibirsk Russia \\
e-mail: \texttt{s.basalaev@g.nsu.ru}

\smallskip
\noindent
Sergei Agapov \\
Sobolev Institute of Mathematics,
4 Acad. Koptyug avenue, 630090 Novosibirsk Russia \\
e-mail: \texttt{agapov@math.nsc.ru}, \texttt{agapov.sergey.v@gmail.com}

\end{document}